\newtheorem{theorem}{Theorem}
\newtheorem{lemma}[theorem]{Lemma}
\newcommand{\io}{\ensuremath{\overline{i}}}
\newcommand{\jo}{\ensuremath{\overline{j}}}
\newcommand{\uo}{\ensuremath{\overline{u}}}
\newcommand{\fo}{\ensuremath{\overline{f}}}
\newcommand{\iz}{\ensuremath{\overline{i}_z}}
\newcommand{\jz}{\ensuremath{\overline{j}_z}}
\newcommand{\num}{\ensuremath{\mathcal{N}}}
\newcommand{\spartial}{\ensuremath{\strut\partial}}
\newcommand{\PL}{\ensuremath{\Phi}}
\newcommand{\jac}{\ensuremath{J_{\xi\eta}}}
\newcommand{\JIT}{\ensuremath{\left(\jac^{T}\right)^{-1}}}
\newcommand{\authorstyle}[1]{{\large\bfseries#1}} 
\newcommand{\institution}[1]{{\footnotesize\itshape#1}} 
\newcommand{\HorRule}{\rule{\linewidth}{1pt}} 
	\HorRule\vspace{10pt} 
\title{QTT-isogeometric solver in two dimensions} 
\author{
	\authorstyle{L. Markeeva\textsuperscript{1,2}, I. Tsybulin\textsuperscript{3}, I. Oseledets\textsuperscript{1,4}} 
	\newline\newline 
	\textsuperscript{1}\institution{Center for Computational and Data-Intensive Science and Engineering, Skolkovo Institute of Science and Technology, Moscow, Russia}\\
	\textsuperscript{2}\institution{Department of Applied Math and Control, Moscow Institute of Physics and Technology, Moscow, Russia}\\
	\textsuperscript{3}\institution{Laboratory of Fluid Dynamics and Seismics, Moscow Institute of Physics and Technology, Moscow, Russia}\\ 
	\textsuperscript{4}\institution{Institute of Numerical Mathematics of Russian Academy of Sciences, Moscow, Russia} 
}
\begin{document}
	\maketitle
	
	
	
	
	\section{Introduction}

The goal of this paper is to develop a numerical algorithm that solves a \emph{two-dimensional elliptic partial differential equation} in a polygonal domain using \emph{tensor methods} and ideas from \emph{isogeometric analysis}~\cite{bommes2013quad}. The algorithm is based on the \emph{Finite Element} (FE) \emph{approximation}~\cite{strang1973analysis} with \emph{Quantized Tensor Train decomposition} (QTT)~\cite{oseledets2011tensor, khor-qtt-2011, osel-2d2d-2010} used for matrix representation and solution approximation. 

Recently, Kazeev and Schwab~\cite{kazeev2015quantized} have proven that the QTT representation constructed on uniform tensor-product meshes for FE approximations converge exponentially in terms of the effective number $N$ of degrees of freedom. They used the discontinuous Galerkin method with QTT on a curvilinear polygon domain. QTT-format is used as a low-rank approximation for tensors. One of the ideas that were introduced in~\cite{kazeev2015quantized} is a special ordering of elements called \emph{transposed QTT} (identical to z-order curve~\cite{morton1966computer}) which decreases TT-ranks. 
\begin{figure}[h]
	\centering
	\includegraphics[width=0.65\textwidth]{./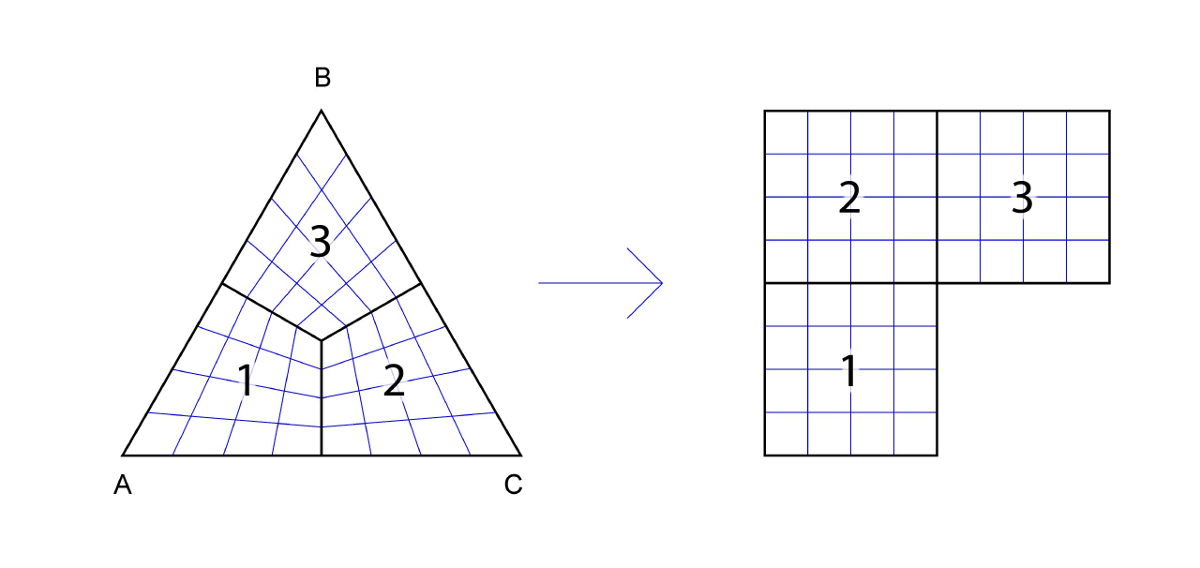}
	\caption{A triangle decomposition into three quadrangles.}
	\label{img:triangle}
\end{figure}

The main problem which was not addressed in~\cite{kazeev2015quantized} is how to construct a global stiffness matrix in the QTT-format with low approximation ranks ``on the fly''. This is important because it leads to a lower memory consumption. Additionally, the idea in~\cite{kazeev2015quantized} was proposed for a different method (discontinuous Galerkin vs \emph{Finite Element Method} (FEM)). The present paper provides a numerical algorithm which uses ideas of isogeometric analysis to decompose a domain into a set of adjacent quadrangular domains, builds a quadrangle mesh on each subdomain, builds a stiffness matrix for the solution on each of the meshes, and concatenates these matrices between adjacent subdomains. The global stiffness matrix is constructed by putting the stiffness matrices from each subdomain on the main diagonal and special interface matrices to off-diagonal blocks. 
Subdomain stiffness matrices are constructed with \emph{the Neumann boundary conditions} on the inter-domain boundaries and with \emph{the Dirichlet boundary conditions} on the domain boundary. Interface matrices represent simple auxiliary equations that are imposed on every internal boundary between each pair of adjacent subdomains. These equations also turn distinct Neumann boundary conditions for each subdomain into the correct FEM discretization of the original equation on the inter-domain boundaries.
This paper explains main ideas on a simple example which is shown in Figure~\ref{img:triangle}.

If we use the natural ordering (row-by-row) of unknowns during the global stiffness matrix construction, the resulting matrix has QTT-ranks of at least $2^d$, where $2^d$ is the number of nodes per quadrangle side, which makes method inapplicable. In this paper we expand the idea of transposed QTT and introduce the technique for building QTT matrix approximation with \emph{z-ordering} (see Figure~\ref{img:ordering}) with the help of Kronecker product.

The key points of the paper are:
\begin{itemize}
	\item we propose a special discretisation scheme that allows to construct the global stiffness matrix in the QTT-format. This algorithm has $O(\log n)$ complexity, where $n=2^d$ is the number of nodes per quadrangle side;

	\item we propose a new operation in QTT-format called the \emph{z-kron operation}, which makes it possible to build a matrix in z-order if the matrix can be described in terms of Kronecker products and sums;
	
	\item we present an algorithm for building a QTT coefficient matrix in z-order for FEM ``on the fly'' as opposed to the transformation of a calculated matrix into QTT. This algorithm has $O(\log n)$ complexity, where $n=2^d$ is the number of nodes per quadrangle side.
\end{itemize}	

\section{Discretization scheme}

\subsection{Model problem}\label{sec:model_problem}

In this paper we consider the following two-dimensional model problem:
\begin{equation} \label{eq:Dirichlet_problem}
\left\{
\begin{array}{l}
-\Delta u = f\mbox{ in }\Omega, \\
\left.u\right|_{\partial \Omega} = 0,
\end{array}
\right.
\end{equation}

here $\Omega\subseteq \mathbb{R}^2$, $\Omega$ is a polygon in the two-dimensional space, with the boundary $\partial\Omega$. We divide $\Omega$ into a set of $q$ \emph{adjacent quadrangles} (\emph{subdomains}):

\begin{equation}
\Omega=\bigcup_{m=1}^q\Omega_m,
\end{equation}

where each $\Omega_m$ is a quadrangle. We map each quadrangle $\Omega_m$ into the standard square $\mathbb{K} = [-1, 1]^2$ using bilinear transformation. In each quadrangle $\Omega_m$ the FEM basis is introduced (details are in Sections \ref{sec:fem_formulation}, \ref{sec:jacobian_linearity}). We build a $2^d\times 2^d$ tensor-product mesh on each $\Omega_m$. 
The number of degrees of freedom in each mesh is equal to the number of nodes in each mesh. It is very important that unknowns that correspond to nodes on the inner boundaries are introduced for both adjacent quadrangles. This fact allows us to have $4^{d}$ unknowns and additional ``consistency'' equations need to be introduced, see Section \ref{sec:solution_concatenation}. It can be shown that the final linear system will have the following form: 
\begin{equation}\label{eq:fem_concatinated}
Bu=g\text{, where }
B = \begin{pmatrix}
B_{11} & B_{12} & \dotsc & B_{1q}\\
B_{21} & B_{22} & \dotsc & B_{2q}\\
\vdots & \vdots & \ddots & \vdots\\
B_{q1} & B_{q2} & \dotsc & B_{qq}
\end{pmatrix},\ 		
u = \begin{pmatrix}
u^{\left(1\right)}\\			
\vdots\\
u^{\left(q\right)}\\
\end{pmatrix},\ 
g=\begin{pmatrix}
g^{\left(1\right)}\\
\vdots\\
g^{\left(q\right)}
\end{pmatrix},
\end{equation}

\begin{equation}\label{eq:stiffness_interfaces}
B_{mm} = A^{\left(m\right)}-\gamma \Pi_{mm},\ 
B_{mp}=\begin{cases}
\Pi_{mp}A^{\left(p\right)}-\gamma\Pi_{mp},&\text{ if $m\neq p$ and $m$ and $p$ are adjacent},\\
0,&\text{ otherwise,}
\end{cases}
\end{equation}

\begin{equation}\label{eq:force_interfaces}
g^{\left(m\right)}=f^{\left(m\right)}+\sum_{p\neq m}\Pi_{mp}f^{\left(p\right)},
\end{equation}

where $B$ is a \emph{global stiffness matrix}, $g$ is a \emph{global stiffness vector}, $u$ is a vector of unknowns for the whole system. $m$ and $p$ are indices of some quadrangles, $A^{(m)}$ is a \emph{stiffness matrix} for $\Omega_m$, $u^{(m)}$ are vectors of unknowns for $\Omega_m$, $f^{(m)}$ is a \emph{force vector} for $\Omega_m$. Note that, $B_{mp}$ is not equal to zero if and only if $m$-th and $p$-th quadrangles are adjacent. $\Pi_{mm}$ and $\Pi_{mp}$ are special permutation matrices, we describe them in detail in Section \ref{sc:concatination_matrix}. $\gamma$ is an additional multiplier which is approximately equal to the corresponding diagonal elements of in $A^{\left(m\right)}$, see Section \ref{sec:solution_concatenation} for details.

The construction of the matrix $B$ and the vector $g$ in z-order describes by Algorithm~\ref{alg:final_alg}\footnote{Implementation is available at https://github.com/RerRayne/qtt-laplace}.

\begin{algorithm}
	\caption{Build the stiffness matrix and the force vector}
	\label{alg:final_alg}
	\begin{algorithmic}[1]
		\Require Divide a polygonal field $\Omega$ into a set of adjacent quadrangles $\Omega=\cup_{m=1}^q\Omega_m$
		\State Generate z-order mesh grid matrices $\iz$ and $\jz$, as described in Section \ref{sec:z-order-mesh-grid}
		\For {a mesh on each $\Omega_m$}
		\State Build a quadrangle mesh with $4^d$ nodes for each mapped quadrangle
		\State Map all finite elements from the mesh to the square $\mathbb{K}=[-1, 1]^2$ using vectorized operations
		\State In each finite element calculate $J$ and $J^A$ at the center, using Theorem \ref{th:linear_jacobian}.
		\State Init a global stiffness matrix $B^{(m)}$ by zeros
		\State Init a global force vector $g^{(m)}$ by zeros
		\For {all pairs of shape functions $c_1=(i_1, j_1)$, $c_2=(i_2, j_2)$ on finite elements}
		\State Calculate the vector $K_{c_1 c_2}$. See Section \ref{sec:stiffness_global}\label{st:K_l}
		\State Calculate vector $G_{c_1 c_2}$ as described in Section \ref{sec:stiffness_global}
		\State Calculate $A_{c_1 c_2}^{(m)}$ according to the formula \eqref{eq:global_stiffness_matrix}
		\State Calculate $f_{c_1 c_2}^{(m)}$ according to the formula \eqref{eq:global_force_vector}
		\State $B^{(m)} += A_{c_1 c_2}^{(m)}$
		\State $g^{(m)} += f_{c_1 c_2}^{(m)}$\label{st:g_sum}
		\EndFor
		\EndFor
		\State Build concatenation matrices for subdomains as described in Section \ref{sc:concatination_matrix}
		\State Build matrices $B_{ij}$, $i,j=1\ldots q$ by applying formula \eqref{eq:stiffness_interfaces} to $B^{(m)}$
		\State Build vectors $g^{(m)}$ by applying formula \eqref{eq:force_interfaces} to $f^{(m)}$
		\State Put all $B_{ij}$ into the resulting stiffness matrix $B$ by applying boundary conditions from Section \ref{sec:condition_on_subdomain}.
		\State Put all $g^{(m)}$ into the resulting force vector $g$ by applying boundary conditions from Section \ref{sec:condition_on_subdomain}\label{st:build_global_g}\\
		\Return $B$ and $g$
	\end{algorithmic}
\end{algorithm}


\subsection{Construction of a diagonal block}\label{sec:fem_formulation}

Each diagonal block of the matrix $B$ in \eqref{eq:fem_concatinated} is a sum of two matrices. The first matrix is the FEM stiffness matrix on a single quadrangle with the Dirichlet boundary conditions on the original boundary and the Neumann boundary condition on the inner boundaries between two adjacent quadrangless. The second part helps to build additional ``consistency'' equations for the concatenation along the boundary between adjacent quadrangles. We explain the idea of this concatenation in Sections \ref{sec:solution_concatenation}, \ref{sc:concatination_matrix}-\ref{sec:concatination_by_vertex}.

Thus to construct the diagonal block we need to form the stiffness matrix for a boundary value problem on a single quadrangle. The problem is:
\begin{equation} \label{eq:domain_equation}
	\left\{
	\begin{array}{rl}
	-\Delta u = f, &  \text{ in } \Omega_m,\\
	u = 0, & \text{ on domain boundary } \partial \Omega_m \cap \partial \Omega,\\
	\frac{\partial u}{\partial n} = 0, & \text{ on internal boundary } \partial \Omega_m \setminus \partial \Omega.
	\end{array}
	\right.
\end{equation}

Construction of the stiffness matrix in the QTT-format for such problems has been considered in~\cite{dolgov2015simultaneous,kazeev2013low} and we follow a similar approach. Consider the classical FEM formulation on a quadrangle mesh:

\begin{equation}\label{eq:uf-decomp}
u_h(x, y) = \sum_{i=1}^{n}\sum_{j=1}^{n}u_{ij}\phi_{ij}(x, y),\ f_h(x, y) = \sum_{i=1}^{n}\sum_{j=1}^{n}f_{ij}\phi_{ij}(x, y),
\end{equation}

where index $h$ denotes discretized problem, $u_h\left(x, y\right)$ and $f_h\left(x, y\right)$ are expressed in the basis of \emph{shape functions} $\phi_{ij}$. 

Using the standard Galerkin method we get:
\begin{align}\label{eq:FEM}
A\overline{u} = M\overline{f}, \mbox{ where } A_{\num_{ij}\num_{kl}} =\iint_{\Omega_{m}} \left(\nabla \phi_{ij}, \nabla \phi_{kl}\right) dxdy,\ M_{\num_{ij} \num_{kl}}= \iint_{\Omega_{m}} \left(\phi_{ij},\phi_{kl}\right)dxdy,
\end{align}

\begin{equation}
\uo = \begin{bmatrix}
u_{1,1}\\
u_{1,2}\\	
\vdots\\
u_{n,n}\\
\end{bmatrix}, \ \fo = \begin{bmatrix}
f_{1,1}\\
f_{1,2}\\	
\vdots\\
f_{n,n}\\
\end{bmatrix}.
\end{equation}

The matrix $A\in \mathbb{R}^{n^2\times n^2}$ is a \emph{stiffness matrix}, $M\in \mathbb{R}^{n^2\times n^2}$ is a \emph{mass matrix}. $A, M \in \mathbb{R}^{n^2\times n^2}$. Here we introduce a mapping $\num_{ij}$ from index pairs $(i,j)$ into some elements ordering. 

We map each finite element $F_e$ to $\mathbb{K}$ bilinearly: $h:F_e\rightarrow \mathbb{K}$, where $\mathbb{K} = \left[-1, 1\right]^2$. Points from $F_{e}$ use $x, y$ to denote coordinates, while points from $\mathbb{K}$ use $\xi, \eta$ to denote coordinates. 

This leads to the following stiffness and mass matrices:
\begin{align}\label{eq:Aij}
A_{\mathcal N_{ij} \mathcal N_{kl}} &= 
\iint_{\Omega_{m}} \left(\nabla \phi_{ij}, \nabla \phi_{kl}\right) dxdy =\\ 
&=\sum_{\substack{F_e\text{ such that }\\ \left(\nabla \Phi_{\tau_{F_e, ij}}, \nabla \Phi_{\tau_{F_e, kl}}\right) \not \equiv 0}}
\iint_{\mathbb{K}} 
\left(
\JIT \nabla\Phi_{\tau_{F_e, ij}},
\JIT \nabla\Phi_{\tau_{F_e, kl}}
\right)
\left|\jac\right| d\xi d\eta,
\end{align}

\begin{equation}
\label{eq:Mij}
M_{\mathcal N_{ij} \mathcal N_{kl}} = \iint_{\Omega_{m}} \left(\phi_{ij},\phi_{kl}\right)dxdy = \sum_{\substack{F_e\text{ such that }\\ \left(\Phi_{\tau_{F_e, ij}}, \Phi_{\tau_{F_e, kl}}\right) \not \equiv 0}} \iint_{\mathbb{K}} \left(\Phi_{\tau_{F_e, ij}}, \Phi_{\tau_{F_e, kl}}\right)\left|\jac\right|d\xi d\eta,
\end{equation}                                                                                          

where $\tau_{F_e, ij}$ is a corner point of $\mathbb{K}$ which corresponds to the corner $(i, j)$ of $F_e$. We will omit the index $F_e$ of $\tau_{F_e, ij}$ where not ambiguous.
$\Phi_{\tau_{ij}}$ is the shape function at $\left(\xi, \eta\right)$ corresponding to $\phi_{ij}$ on a mapped finite element $F_e$ to $\mathbb{K}$, 
$\jac = 
\begin{bmatrix}
\dfrac{\spartial x}{\spartial \xi}& \dfrac{\spartial x}{\spartial \eta}\\
\dfrac{\spartial y}{\spartial \xi}& \dfrac{\spartial y}{\spartial \eta}
\end{bmatrix}$ 
at point $(\xi,\eta)$. Here we assume that $\Phi_{\tau_{ij}}$ is one of the \emph{Lagrange shape functions}, which are described in Section \ref{sec:jacobian_linearity}.

The relation between $\nabla \Phi_{\tau_{ij}}$ and $\nabla \phi_{ij}$ is:
\begin{equation}
\begin{bmatrix}
\dfrac{\spartial}{\spartial x} \\
\dfrac{\spartial}{\spartial y}
\end{bmatrix}
\phi_{ij} 
= 
\begin{bmatrix}
\dfrac{\spartial}{\spartial \xi}  \dfrac{\spartial \xi} {\spartial x} + \dfrac{\spartial}{\spartial \eta} \dfrac{\spartial \eta}{\spartial x} \\
\dfrac{\spartial}{\spartial \xi}  \dfrac{\spartial \xi} {\spartial y} + \dfrac{\spartial}{\spartial \eta} \dfrac{\spartial \eta}{\spartial y} 
\end{bmatrix} 
\Phi_{\tau_{ij}} 
= 
\begin{bmatrix}
\dfrac{\spartial \xi}{\spartial x} & \dfrac{\spartial \eta}{\spartial x}\\
\dfrac{\spartial \xi}{\spartial y} & \dfrac{\spartial \eta}{\spartial y}
\end{bmatrix}
\begin{bmatrix}
\dfrac{\spartial}{\spartial \xi}\\
\dfrac{\spartial}{\spartial \eta}
\end{bmatrix}
\Phi_{\tau_{ij}}
=
\JIT \nabla\Phi_{\tau_{ij}}.
\end{equation}

\subsection{Solutions concatenation}\label{sec:solution_concatenation}

As was described in Section~\ref{sec:model_problem}, $\Omega$ is divided into $q$ adjacent quadrangles $\Omega=\cup_{m=1}^q\Omega_m$. In each subdomain a mesh with $n\times n$ nodes is built. External boundaries have the Dirichlet boundary condition, boundaries between adjacent quadrangles have the Neumann boundary condition. For each $\Omega_m$ a coefficient matrix $A^{(m)}$ and a vector $f^{(m)}$ are built (see Section~\ref{sec:fem_formulation}). After matrices are built, we need be sure that some quadrangles should ``concatenate'' their solution along with the inner boundary. This section explains the way how to describe this ``concatenation'' with the help of the matrix $A^{(m)}$ and the vector $f^{(m)}$. 

We can rewrite \eqref{eq:FEM}, adding corresponding markers $(m)$ as:
\begin{equation}\label{eq:fem_approx_at_node}
\sum_{k,l=1}^{n} A_{\num_{ij}\num_{kl}}^{(m)}\uo_{\num_{kl}}^{(m)} = 
f_{\num_{ij}}^{(m)},
\end{equation}

where $i,j\in[1\ldots n]$ specify a position of the node in the mesh, $m$ is the number of the quadrangle in which we are solving the problem.

Assume that during the concatenation of solutions in subdomains $m_1, m_2, \dots, m_r$ nodes $(i_1, j_1), \dotsc (i_r, j_r)$, where $r \geq 2$, are merged. Our goal is to get the right solution approximation at the joined nodes. For that, we have to replace Equation~\eqref{eq:fem_approx_at_node} at each node $i_k$ with:
\begin{equation}\label{eq:concatenation_replacement}
\sum_{k,l=1}^{n} A_{\num_{i_1 j_1}\num_{kl}}^{(m_1)}\uo_{\num_{kl}}^{(m_1)} +
\ldots +
\sum_{k,l=1}^{n} A_{\num_{i_r j_r}\num_{kl}}^{(m_r)}\uo_{\num_{kl}}^{(m_r)}
= f_{\num_{ij}}^{(m_1)} + \ldots + f_{\num_{ij}}^{(m_r)}.
\end{equation}

Additionally, we need $r-1$ equations which describe continuity at the joined nodes:
\begin{equation}\label{eq:continuty_requirements}
\uo^{(m_1)}_{\num_{i_1 j_1}}=\uo^{(m_2)}_{\num_{i_2 j_2}}, \uo^{(m_1)}_{\num_{i_1 j_1}}=\uo^{(m_3)}_{\num_{i_3 j_3}}, \dotsc, \uo^{(m_1)}_{\num_{i_1 j_1}}=\uo^{(m_r)}_{\num_{i_r j_r}}.
\end{equation}

However, \eqref{eq:continuty_requirements} does not follow the structure of \eqref{eq:FEM} and decreases the diagonal dominance of $A$, hence it should be modified.

By taking various linear combinations we can reformulate \eqref{eq:continuty_requirements} in the following symmetric way:
\begin{equation}
\left\{
\begin{array}{c@{}c@{\,}c@{}c@{\;}c@{\;}c@{\,}c@{}c@{\;}c}
\displaystyle (r-1) \uo^{(m_1)}_{\num_{i_1 j_1}} &-& \uo^{(m_2)}_{\num_{i_2 j_2}} &-& \dots &-& 
\uo^{(m_r)}_{\num_{i_r j_r}} &=& 0,\\
\displaystyle - \uo^{(m_1)}_{\num_{i_1 j_1}} &+& (r-1)\uo^{(m_2)}_{\num_{i_2 j_2}} &-& \dots &-& 
\uo^{(m_r)}_{\num_{i_r j_r}} &=& 0,\\
&&&&\ddots\\
\displaystyle -\uo^{(m_1)}_{\num_{i_1 j_1}} &-& \uo^{(m_2)}_{\num_{i_2 j_2}} &-& \dots &+& 
(r-1)\uo^{(m_r)}_{\num_{i_r j_r}} &=& 0.
\end{array}
\right.
\end{equation}
This is a system with $r\times r$ matrix of rank $r-1$. To make it nonsingular let us add \eqref{eq:concatenation_replacement} to each equation. For the sake of readability consider $r = 2$ and $\mathcal I_t = \mathcal N_{i_tj_t}, \mathcal J = \mathcal N_{kl}$:
\begin{equation}\label{eq:equality_reformulation}
\left\{
\begin{array}{c}
\displaystyle 
\phantom{-}\gamma\uo^{(m_1)}_{\mathcal I_1} + 
\sum_{\mathcal J=1}^{n^2} A_{\mathcal I_1 \mathcal J}^{(m_1)}
\uo_{\mathcal J}^{(m_1)} 
-
\gamma\uo^{(m_2)}_{\mathcal I_2} + 
\sum_{\mathcal J=1}^{n^2} A_{\mathcal I_2 \mathcal J}^{(m_2)}
\uo_{\mathcal J}^{(m_2)}
= 
f_{\mathcal I_1}^{(m_1)}
+ 
f_{\mathcal I_2}^{(m_2)},
\\
\displaystyle 
- \gamma\uo^{(m_1)}_{\mathcal I_1} +
\sum_{\mathcal J=1}^{n^2} A_{\mathcal I_1\mathcal J}^{(m_1)}
\uo_{\mathcal J}^{(m_1)} 
+
\gamma\uo^{(m_2)}_{\mathcal I_2} + 
\sum_{\mathcal J=1}^{n^2} A_{\mathcal I_2\mathcal J}^{(m_2)}
\uo_{\mathcal J}^{(m_2)}
=
f_{\mathcal I_1}^{(m_1)}
+ 
f_{\mathcal I_2}^{(m_2)}
,
\end{array}
\right.
\end{equation}
here $\gamma$ is an additional multiplier with the magnitude and sign close to that of diagonal elements of $A^{(m_k)}$. 

To rewrite this operation in the matrix form let us now introduce a matrix $\Pi$ that maps between indices $\mathcal I_1$ and $\mathcal I_2$, that is
\[
\Pi_{\mathcal I_1 \mathcal I_2} = \begin{cases}
1, &\mbox{if node } \mathcal I_1 \mbox { from } \Omega^{(m_1)} \mbox{ should be concatenated with node } \mathcal I_2 \mbox{ from } \Omega^{(m_2)},\\
0, &\mbox{otherwise}.
\end{cases} 
\]
The whole FEM system can now be expressed 
\begin{equation}
\begin{pmatrix}
\gamma \Pi \Pi^\top + A^{(m_1)} 
&
-\gamma \Pi + \Pi A^{(m_2)}\\
-\gamma \Pi^\top + \Pi^\top A^{(m_1)} 
&
\gamma \Pi^\top \Pi + A^{(m_2)}\\
\end{pmatrix}
\begin{pmatrix}
\uo^{(m_1)}\\
\uo^{(m_2)}
\end{pmatrix} = 
\begin{pmatrix}
I & \Pi\\
\Pi^\top & I\\
\end{pmatrix}
\begin{pmatrix}
f^{(m_1)}\\
f^{(m_2)}
\end{pmatrix}.
\end{equation}
This operation has to be applied to each pair of adjacent subdomains.

\subsection{Structure of Jacobian}\label{sec:jacobian_linearity}

\begin{figure}[ht!]
\centering
\includegraphics[width=.5\textwidth]{./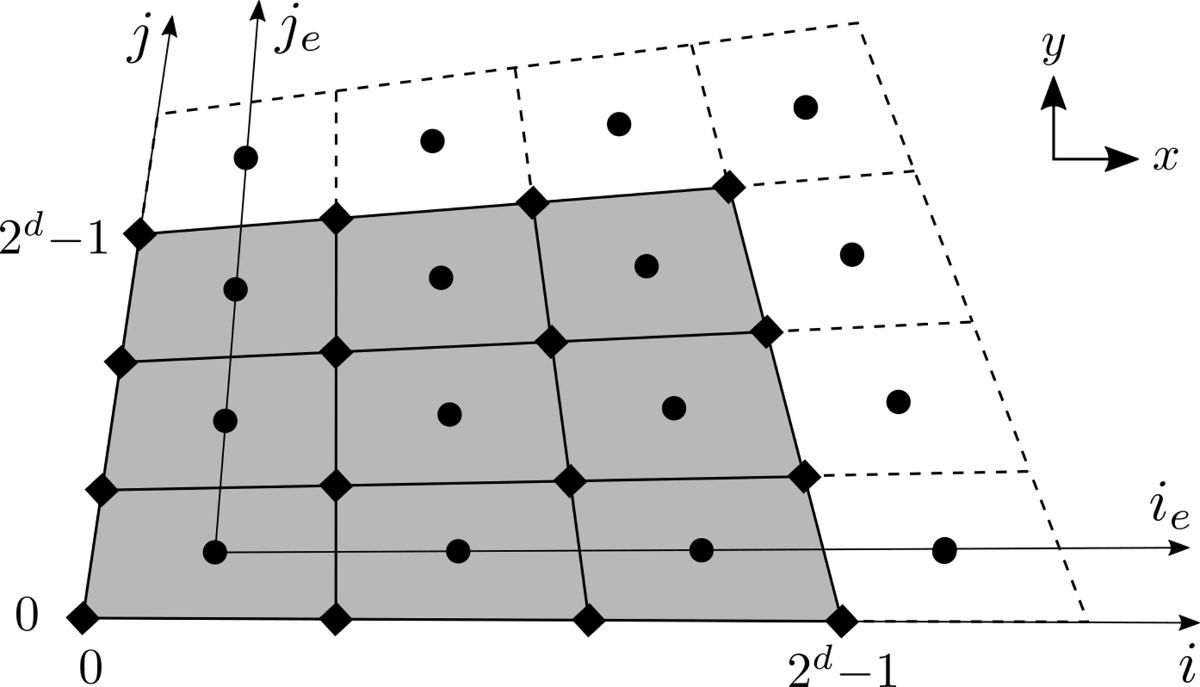}
\caption{The mesh in $\Omega_m$ domain. Here indices $i, j$ enumerate mesh vertex and 
$i_e, j_e$ enumerate mesh elements. Padding elements are dashed.}
\label{fig:omega_m}
\end{figure}

Recall that each quadrangle $\Omega_m$ is mapped onto the unit square $\mathbb K$. $\mathbb K$ is uniformly split into $(2^d-1) \times (2^d-1)$ elements and each is mapped back to the original quadrangle. Figure~\ref{fig:omega_m} presents the results of this procedure in the mesh.

Since QTT normally deals with indices that have $2^d$ range, not $2^d-1$, a single layer of padding elements is added to the mesh. These elements are not used in any way and exist only for a padding purpose. 

The coordinates of vertex -- $x_{i,j}, y_{i,j}$ are bilinear functions of $i, j$ (by construction). To give explicit expressions for them let us first introduce the following shape functions (\emph{Lagrange basis} on the $\mathbb K$):
\begin{equation}\label{eq:phis}
\begin{aligned}
\PL_{-1, -1}\left(\xi, \eta\right) &= \dfrac{\left(1-\xi\right)\left(1-\eta\right)}{4},\qquad
\PL_{1, -1}\left(\xi, \eta\right) = \dfrac{\left(1+\xi\right)\left(1-\eta\right)}{4},\\
\PL_{1, 1}\left(\xi, \eta\right) &= \dfrac{\left(1+\xi\right)\left(1+\eta\right)}{4},\qquad
\PL_{-1, 1}\left(\xi, \eta\right) = \dfrac{\left(1-\xi\right)\left(1+\eta\right)}{4},
\end{aligned}
\end{equation}  
where indices of $\PL$ denote the corner point of $\mathbb{K}$ where corresponding $\PL$ is equal to one. This kind of shape functions entails the following property of a Jacobian that is important for this paper.

Given a quadrangle $Q$ with vertices $(x_1, y_1)$, $(x_2, y_2)$, $(x_3, y_3)$, $(x_4, y_4)$ it is easy now to give the mapping $r$ from the standard square $\mathbb K$ to $Q$:
\[
	r(\xi, \eta) = 
	\begin{bmatrix}
	x\left(\xi, \eta\right)\\
	y\left(\xi, \eta\right)\\
	\end{bmatrix} = 
	\begin{bmatrix}
	x_1 & x_2 & x_3 & x_4\\
	y_1 & y_2 & y_3 & y_4
	\end{bmatrix}
	\begin{bmatrix}
	\PL_{-1, -1}(\xi, \eta)\\
	\PL_{1, -1}(\xi, \eta)\\
	\PL_{1, 1}(\xi, \eta)\\
	\PL_{-1, 1}(\xi, \eta)
	\end{bmatrix}.
\]
Obviously the map is bilinear in $\xi, \eta$ and each corner of $\mathbb K$ is mapped to the corresponding corner of $Q$. 

Taking $\Omega_m$ as $Q$ we can now express the coordinates of the grid vertices:
\[
\begin{bmatrix}
x_{i,j}\\
y_{i,j}
\end{bmatrix} = 
r\left(
2\frac{i}{2^d-1}-1, 2\frac{j}{2^d-1}-1
\right) \equiv \tilde r(i, j).
\]
It is clear that $\tilde r$ is a bilinear function of $i, j$ so it can be written in form $\tilde r(i, j) =  q_0 + q_x i + q_y j + q_{xy} ij$, where $q_0, q_x, q_y, q_{xy}$ are some vectors.

Consider a finite element with index $(i, j)$. Its vertices have indices $(i, j), (i+1, j), (i, j+1)$ and $(i+1, j+1)$. One may easily verify that the element's center is located at
\[
\begin{bmatrix}
\dfrac{x_{i,j} + x_{i+1,j} + x_{i,j+1} + x_{i+1, j+1}}{4}\\[9pt]
\dfrac{y_{i,j} + y_{i+1,j} + y_{i,j+1} + y_{i+1, j+1}}{4}\\
\end{bmatrix}
= r\left(i + \frac{1}{2}, j + \frac{1}{2}\right).
\]
Thus the mapping $\tilde r$ gives not only the vertices' coordinates, but also the coordinates of the elements' centers.

To construct the FEM matrices we need to map each finite element to $\mathbb K$ and evaluate the Jacobian matrix of the mapping for the each finite element. This operation is greatly simplified by the following Lemma:
\newcommand{\pd}[2]{\frac{\partial #1}{\partial #2}}
\newcommand{\adj}{\operatorname{adj}}
\begin{lemma}\label{th:linear_jacobian}
	Consider a finite element $Q_{ij}$ with vertices at 
	$(x_{i,j}, y_{i,j})$,
	$(x_{i+1,j}, y_{i+1,j})$,
	$(x_{i,j+1}, y_{i,j+1})$,
	$(x_{i+1,j+1}, y_{i+1,j+1})$. Let $r_{ij}(\xi, \eta)$ be the mapping from $\mathbb K$ to $Q_{ij}$. Then the Jacobian matrix of this mapping $J^{(i,j)}(\xi, \eta) = \pd{r_{ij}(\xi, \eta)}{(\xi, \eta)}$ is a linear function in $i,j$, that is:
	$$
	J^{(i,j)} (\xi, \eta) = J^{(0,0)} (\xi, \eta) 
	+ i J^{(1,0)} (\xi, \eta)
	+ j J^{(0,1)} (\xi, \eta),
	$$
	and also it its determinant:
	$$
	\left| J^{(i,j)}(\xi, \eta) \right| = \left| J^{(0,0)} (\xi, \eta) \right| 
	+ i \left| J^{(1,0)} (\xi, \eta)\right|
	+ j \left| J^{(0,1)} (\xi, \eta)\right| .
	$$
\end{lemma}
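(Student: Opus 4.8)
The plan is to obtain a closed-form expression for the element map $r_{ij}$, differentiate it once, and then read off the asserted affine dependence on $(i,j)$. The starting point is the fact already recorded above that the grid map is bilinear, $\tilde r(i,j)=q_0+q_x i+q_y j+q_{xy}\,ij$. By construction $r_{ij}\colon\mathbb K\to Q_{ij}$ is the unique bilinear interpolant on $\mathbb K$ taking the four vertex values $\tilde r(i,j),\tilde r(i+1,j),\tilde r(i,j+1),\tilde r(i+1,j+1)$ at the corners of $\mathbb K$. On the other hand, composing $\tilde r$ with the affine rescaling $(\xi,\eta)\mapsto\bigl(i+\tfrac{\xi+1}{2},\,j+\tfrac{\eta+1}{2}\bigr)$ — which sends the corners of $\mathbb K$ onto the index pairs $(i,j),(i+1,j),\dots$ — again produces a function that is bilinear in $(\xi,\eta)$ and matches the same four corner values; since a bilinear function on a square is determined by its corner values, the two coincide, so $r_{ij}(\xi,\eta)=\tilde r\bigl(i+\tfrac{\xi+1}{2},\,j+\tfrac{\eta+1}{2}\bigr)$. (The argument is insensitive to which corner of $\mathbb K$ is paired with which vertex, provided the convention is the natural one; a different natural convention merely swaps the roles of $\xi$ and $\eta$ in what follows.)

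With this formula in hand, differentiation is immediate:
\[
\pd{r_{ij}}{\xi}=\tfrac12\Bigl(q_x+q_{xy}\bigl(j+\tfrac{\eta+1}{2}\bigr)\Bigr),\qquad
\pd{r_{ij}}{\eta}=\tfrac12\Bigl(q_y+q_{xy}\bigl(i+\tfrac{\xi+1}{2}\bigr)\Bigr).
\]
Hence the first column of $J^{(i,j)}(\xi,\eta)$ is affine in $j$ and does not involve $i$, while the second column is affine in $i$ and does not involve $j$; in particular $J^{(i,j)}(\xi,\eta)$ is affine in $(i,j)$, which is the first claim, with $J^{(1,0)}=\partial J^{(i,j)}/\partial i$ and $J^{(0,1)}=\partial J^{(i,j)}/\partial j$ the constant (i.e. $(i,j)$-independent) coefficient matrices — having columns $0,\tfrac12 q_{xy}$ and $\tfrac12 q_{xy},0$ respectively — and $J^{(0,0)}$ the value at $i=j=0$. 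For the determinant, write the columns as $a(\xi,\eta)+j\,b$ and $c(\xi,\eta)+i\,b$ with the common slope $b=\tfrac12 q_{xy}$, and expand by multilinearity of the $2\times2$ determinant in its columns: $\bigl|J^{(i,j)}\bigr|=\det[a\mid c]+i\det[a\mid b]+j\det[b\mid c]+ij\det[b\mid b]$. The would-be quadratic term vanishes, $ij\det[b\mid b]=0$, since its two columns coincide; hence $\bigl|J^{(i,j)}(\xi,\eta)\bigr|$ is again affine in $(i,j)$, which is the second claim, with $\bigl|J^{(0,0)}\bigr|,\bigl|J^{(1,0)}\bigr|,\bigl|J^{(0,1)}\bigr|$ read as its constant, $i$- and $j$-coefficients (the first being genuinely $\det J^{(0,0)}$).

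There is no serious obstacle: the computation is elementary once the closed form of $r_{ij}$ is set up. The two points that need care are, first, the justification of that closed form — i.e. that bilinear interpolation over $Q_{ij}$ coincides with $\tilde r$ restricted and affinely reparametrised — which is exactly where the earlier bilinearity of $\tilde r$ is used; and second, the determinant identity, which must not be conflated with "the determinant of a sum is the sum of the determinants'' (false in general): what actually makes it work is that the coefficient of the $ij$-term is $\det[b\mid b]$, a determinant with a repeated column, hence zero, because both linear-coefficient matrices contribute the same column $b=\tfrac12 q_{xy}$.
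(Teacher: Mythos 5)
Your proof is correct, and it takes a somewhat different route from the paper's in both halves, while resting on the same key fact that $\tilde r(i,j)=q_0+q_x i+q_y j+q_{xy}\,ij$ with coefficients common to all elements of $\Omega_m$. For the Jacobian, the paper stays inside the shape-function representation: it writes $r_{ij}$ via $\PL_{\pm1,\pm1}$, uses the partition of unity to pass to vertex differences, substitutes $\tilde r$, and reads off linearity from the resulting coefficient row times $\partial\Phi/\partial(\xi,\eta)$. You instead identify the closed form $r_{ij}(\xi,\eta)=\tilde r\bigl(i+\tfrac{\xi+1}{2},\,j+\tfrac{\eta+1}{2}\bigr)$ by uniqueness of bilinear interpolation on a square (your corner pairing agrees with the paper's convention, $(-1,-1)\mapsto(i,j)$ etc.) and differentiate it; this is shorter and makes explicit that the two columns share the common slope $\tfrac12 q_{xy}$. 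For the determinant, the paper recasts the $(i,j)$-dependence as a rank-one update $J^{(0,0)}+\tfrac{q_{xy}}{2}\begin{bmatrix}i & j\end{bmatrix}$ and invokes the matrix determinant lemma (where, incidentally, its $\operatorname{adj}\bigl(J^{(i,j)}_{\xi\eta}\bigr)$ should read $\operatorname{adj}\bigl(J^{(0,0)}_{\xi\eta}\bigr)$), whereas you expand by column multilinearity and observe that the would-be $ij$ term is a determinant with repeated column $\tfrac12 q_{xy}$, hence zero — equivalent content, obtained without quoting the lemma, and your remark that this is what saves the (generally false) ``determinant of a sum'' step is exactly the right point. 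You also read $J^{(1,0)},J^{(0,1)}$ (and their determinants) as the coefficient objects of the affine dependence rather than as literal element Jacobians at $(1,0)$ and $(0,1)$; that is the reading under which the displayed identities hold, and it is the same reading the paper's proof implicitly establishes, so no gap there.
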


\begin{proof}
	The proof is straightforward and is done by direct evaluation of $J^{(i,j)}(\xi, \eta)$.
	First, recall that the mapping $r_{ij}$ may be expressed as:
	\[
	r_{ij}(\xi, \eta) = 
	\begin{bmatrix}
	x_{i,j} & x_{i+1,j} & x_{i+1,j+1} & x_{i,j+1}\\
	y_{i,j} & y_{i+1,j} & y_{i+1,j+1} & y_{i,j+1}
	\end{bmatrix}
	\begin{bmatrix}
	\PL_{-1, -1}(\xi, \eta)\\
	\PL_{1, -1}(\xi, \eta)\\
	\PL_{1, 1}(\xi, \eta)\\
	\PL_{-1, 1}(\xi, \eta)
	\end{bmatrix}.
	\]
	Since $\PL_{-1, -1}(\xi, \eta) + \PL_{1, -1}(\xi, \eta) + \PL_{1, 1}(\xi, \eta) + \PL_{-1, 1}(\xi, \eta) \equiv 1$ we can rewrite $r_{i,j}$ in an equivalent form:
	\[
	r_{ij}(\xi, \eta) = 
	\begin{bmatrix}
	x_{i,j}\\
	y_{i,j}
	\end{bmatrix}
	+	
	\begin{bmatrix}
	0 & x_{i+1,j} - x_{i,j} & x_{i+1,j+1} - x_{i,j} & x_{i,j+1} - x_{i,j}\\
	0 & y_{i+1,j} - x_{i,j} & y_{i+1,j+1} - x_{i,j} & y_{i,j+1} - x_{i,j}
	\end{bmatrix}
	\begin{bmatrix}
	\PL_{-1, -1}(\xi, \eta)\\
	\PL_{1, -1}(\xi, \eta)\\
	\PL_{1, 1}(\xi, \eta)\\
	\PL_{-1, 1}(\xi, \eta)
	\end{bmatrix}.
	\]
	Let us plug $x_{i,j} = \tilde r(i, j) = q_0 + q_x i + q_y j + q_{xy} ij$. The coefficients $q_0,q_x,q_y,q_{xy}$ are the same vectors for the every finite element in the $\Omega_m$ domain. Finally we obtain
	\[
	r_{ij}(\xi, \eta) = 
	\begin{bmatrix}
	x_{i,j}\\
	y_{i,j}
	\end{bmatrix}
	+	
	\begin{bmatrix}
	0 & q_x + q_{xy}j & q_x + q_y + q_{xy}(i+j+1) & q_y + q_{xy} i
	\end{bmatrix}
	\begin{bmatrix}
	\PL_{-1, -1}(\xi, \eta)\\
	\PL_{1, -1}(\xi, \eta)\\
	\PL_{1, 1}(\xi, \eta)\\
	\PL_{-1, 1}(\xi, \eta)
	\end{bmatrix}.
	\]
	By differentiating with respect to $\xi, \eta$ we obtain:
	\[
	J^{(i,j)}_{\xi\eta} = \pd{r_{i,j}(\xi, \eta)}{(\xi, \eta)}
	= 
	\begin{bmatrix}
	0 & q_x + q_{xy}j & q_x + q_y + q_{xy}(i+j+1) & q_y + q_{xy} i
	\end{bmatrix}
	\pd{}{(\xi,\eta)}
	\begin{bmatrix}
	\PL_{-1, -1}(\xi, \eta)\\
	\PL_{1, -1}(\xi, \eta)\\
	\PL_{1, 1}(\xi, \eta)\\
	\PL_{-1, 1}(\xi, \eta)
	\end{bmatrix}.	
	\]
	The rightmost $4\times2$ matrix is independent of $i,j$. For shortness let us denote it as $\pd{\Phi}{(\xi, \eta)}$. Now
	\begin{multline*}
	J^{(i,j)}_{\xi\eta} = 
	\begin{bmatrix}
	0 & q_x + q_{xy}j & q_x + q_y + q_{xy}(i+j+1) & q_y + q_{xy} i
	\end{bmatrix}
	\pd{\Phi}{(\xi, \eta)} = \\ =
	\Big(
		\begin{bmatrix}
		0 & q_x & q_x + q_y + q_{xy} & q_y
		\end{bmatrix}
		+ 
		i
		\begin{bmatrix}
		0 & 0 & q_{xy} & q_{xy}
		\end{bmatrix}
		+ 
		j
		\begin{bmatrix}
		0 & q_{xy} & q_{xy} & 0
		\end{bmatrix}		
	\Big)
	\pd{\Phi}{(\xi, \eta)}.
	\end{multline*}
	The linearity of $J^{(i,j)}_{\xi\eta}$ in $i,j$ is now apparent. The proof for the $\left| J^{(i,j)}_{\xi\eta}\right|$ needs some more work. Let first rewrite $J^{(i,j)}_{\xi\eta}$ as:
	\[
	J^{(i,j)}_{\xi\eta} = J^{(0,0)}_{\xi\eta}
	+ q_{xy} \left(
	i \pd{(\Phi_{1,1} + \Phi_{-1,1})}{(\xi,\eta)}
	+ j \pd{(\Phi_{1,1} + \Phi_{1,-1})}{(\xi,\eta)}	
	\right) = 
	J^{(0,0)}_{\xi\eta} 
	+ \left[\frac{q_{xy}}{2}\right]^T
	\begin{bmatrix}
	i & j
	\end{bmatrix}.
	\]
	Using the matrix determinant lemma $\left|A+uv^T \right| = \left| A \right| + v^T \adj(A) u$:
	\[
	\left| J^{(i,j)}_{\xi\eta}\right| = \left| J^{(0,0)}_{\xi\eta}\right|
	+ 
	\frac{1}{2}
	\begin{bmatrix}
	i & j
	\end{bmatrix}
	\adj\left(J^{(i,j)}_{\xi\eta}\right)
	q_{xy}.
	\]
	So $\left| J^{(i,j)}_{\xi\eta}\right|$ is also a linear function in $i,j$. This finalizes the proof of the theorem.
\end{proof}

\subsection{Building a stiffness matrix and a force vector on a subdomain}\label{sec:stiffness_global}	
For calculating integrals value at present work we use the quadrature rule. We build a stiffness matrix on each subdomain by the following scheme:
\begin{enumerate}
	\item Map all finite elements on this subdomain to $\mathbb{K}=[-1, 1]^2$ using vectorized operations;
	
	\item\label{point:jacobians} Calculate Jacobian at the center of each finite element using the linear property of Jacobian from Section~\ref{sec:jacobian_linearity};
	
	\item Take one of possible combination of two corners in $\mathbb{K}$ (the corner $(-1, -1)$ and $(-1, 1)$, the corner $(-1, 1)$, and $(1, 1)$ and etc.). We denote these corners as $c_1=(i_1, j_1)$ and $c_2=(i_2, j_2)$;
	
	\item Calculate an approximation value of \eqref{eq:Aij} at the center of each finite element using values from point \ref{point:jacobians} and fact that $\nabla \Phi_{i_1 j_1}^L$ and $\nabla \Phi_{i_2 j_2}^L$ are the same for all mapping. After that, we obtain the value of the integral at all finite elements for a specific pair of shape function. Formula \eqref{eq:Aij} can be rewritten for the particular $F_e$ as:
	\begin{equation}\label{eq:Aij_reformulation}
	K_{c_1 c_2} = \iint_{\mathbb{K}} \JIT \nabla\PL_{i_1 j_1}\JIT\nabla\PL_{i_2 j_2}\left|\jac\right| d\xi d\eta\approx 4\nabla \PL_{i_1 j_1}\dfrac{\jac^A (\jac^A)^T}{|\jac|}\nabla\PL_{i_2 j_2},
	\end{equation}
	
	\begin{equation}
	\dfrac{J_{\xi\eta}^A \left(J_{\xi\eta}^A\right)^T}{|J_{\xi\eta}|} = 
	\dfrac{1}{J_{\xi\eta}}
	\begin{pmatrix}
	J_{22}^2 + J_{12}^ 2 & -J_{22}J_{21}-J_{12}J_{11}\\
	-J_{22}J_{21}-J_{12}J_{11} & J_{11}^2+J_{21}^2
	\end{pmatrix} = 
	\begin{pmatrix}
	J_{T_{11}} & J_{T_{12}}\\
	J_{T_{21}} & J_{T_{22}}
	\end{pmatrix},
	\end{equation}
	
	where $J_{kl}$ as a value from $k, l$ position of an adjugate matrix $J_{\xi\eta}^A$. Here we omit domain's indices $\xi, \eta$ from Jacobians for a formula simplification.
	
	We have used a rectangle rule for calculating this integral:
	\begin{equation}
	K_{c_1 c_2}= 4\left(J_{T_{11}}\dfrac{\partial\PL_{i_1 j_1}}{\partial\xi}\dfrac{\partial\PL_{i_2 j_2}}{\partial\xi} + 
	J_{T_{22}}\dfrac{\partial\PL_{i_1 j_1}}{\partial\eta}\dfrac{\partial\PL_{i_2 j_2}}{\partial\eta} + 
	J_{T_{12}}\left(\dfrac{\partial\PL_{i_1 j_1}}{\partial\eta}\dfrac{\partial\PL_{i_2 j_2}}{\partial\xi} + 
	\dfrac{\partial\PL_{ij}}{\partial\xi}\dfrac{\partial\PL_{i_2 j_2}}{\partial\eta}\right)\right).
	\end{equation}
	
	 It is important to mention that all operations in this scheme are vectorized and calculations are done for all finite elements simultaneously. Thus, $K_{c_1c_2}$ here is a vector, which contains integral values between $c_1$, $c_2$ for all finite elements in a mesh.
	 
	\item\label{point:Aij_shifting} A special block-matrix $V$ is used for shifting elements to their final position in a stiffness matrix by the following formula:
	\begin{equation}\label{eq:global_stiffness_matrix}
	A_{c_1 c_2} = V_{c_1}^T \mathrm{diag}\{K_{c_1 c_2}\} V_{c_2}.
	\end{equation}
	For details how $V$ is constructed refer to the Section \ref{sec:permutation_matrix};
	
	\item Repeat step \ref{point:jacobians}--\ref{point:Aij_shifting} for all pairs;
	\item To obtain the stiffness matrix on the domain, we have to sum all matrices together.
\end{enumerate}

Absolutely the same technique is used for the force vector:
\begin{equation}
	G_{c_1 c_2} = 4\left|J_{0}\right| \PL_{i_1 j_1}	\PL_{i_2 j_2},
\end{equation}
where $J_{0}$ is a Jacobian in the center of a finite element.

\begin{equation}\label{eq:global_force_vector}
	f_{c_1 c_2} = \left(V_{c_1}^T \mathrm{diag} \{G_{c_1 c_2}\} V_{c_2}\right)\fo.
\end{equation}

Now we are ready to construct the tensor representation of our matrix.

	\section{QTT-format}\label{sec:QTT_desc}

The \emph{Tensor Train decomposition} (TT) a non-linear, low-parametric tensors representation based on the separation of variables~\cite{oseledets2011tensor, oseledets2009breaking}. In present work we use a special case of TT -- \emph{Quantized Tensor Train decomposition} (QTT)~\cite{oseledets2011tensor, khor-qtt-2011, osel-2d2d-2010}  which packs simple vectors and matrices into a multidimensional tensor representation. This section explains basic ideas of TT and QTT.

Consider a $d$-dimensional tensor $T\in \mathbb{R}^{n_1\times \ldots \times n_d}$. The element with index $i_{1}\ldots i_{d}$ from $T$ is presented in TT-format as:
\begin{equation}\label{eq:tt-vector}
T_{i_{1}\ldots i_{d}}=\sum_{\alpha_1=1}^{r_1} \ldots \sum_{\alpha_{d-1}=1}^{r_{d-1}} G_1(i_1, \alpha_1) G_2(\alpha_1, i_2, \alpha_2)\ldots G_{d-1}(\alpha_{d-2}, i_{d-1}, \alpha_{d-1})G_d(\alpha_{d-1}, i_d), 
\end{equation}

where $0\leq i_k \leq n_k-1$, $k\in [1\ldots d]$. $G_j$ such that $j\in [2\ldots d-1]$ are three-dimension tensors with size $r_{j-1}\times n_j \times r_j$. $G_1$ and $G_d$ have the shapes $n_1 \times r_1$ and $r_{d-1} \times n_d$ respectively. The upper bounds $r_1,\ldots, r_{d-1}$ in sums \eqref{eq:tt-vector} are called the ranks of representation or ranks. A TT representation is not unique for each tensor. The lowest possible ranks among all possible representation of the tensor are called \emph{TT-ranks}~\cite{oseledets2011tensor}.

Formula \eqref{eq:tt-vector} can be rewritten as a product of matrices which depend on parameters:
\begin{equation}\label{eq:tt-vector-short}
T_{i_{1} \ldots i_{d}} = J_1(i_1)J_2(i_2)\dotsc J_{d-1}(i_{d-1}) J_d(i_d),
\end{equation}
where $J_k$ is a matrix with shape $r_{k-1}\times r_k$, $k\in[1\ldots d]$.

A memory amount which is neccessary for storing $T_{i_{1}\ldots i_{d}}$ can be calculated as:
\begin{equation}\label{eq:tt-memory-consumption}
	S = r_1 n_1 + \sum_{i=2}^{d-1} r_{i-1} n_i  r_i + r_{d-1} n_d.
\end{equation}

The \emph{effective rank (erank)} $r_e$ can be obtained from the solution of the following equation:
\begin{equation}
	S = r_1 n_1 + \sum_{i=2}^{d-1} r_{i-1} n_i  r_i + r_{d-1} n_d = r_e n_1 + \sum_{i=2}^{d-1} r_{e}^2 n_i + r_e n_d.
\end{equation}

The effective rank can be interpeted as an ``average'' rank for cores of tensor $T_{i_{1}\ldots i_{d}}$ and it is proportional to a square root from $S$.

Consider a multidimensional matrix $M\in \mathbb R^{(n_1 \times \ldots \times n_d) \times (m_1 \times \ldots \times m_d)}$. The element with indeces $i_1\ldots i_d;j_1\ldots j_d$ from $M$ is presented in TT as:
\begin{equation}\label{eq:tt-matrix}
M_{\substack{i_{1}\ldots i_{d}\\
		j_{1}\ldots j_{d}}}
=\sum_{\alpha_1=1}^{r_1} \ldots \sum_{\alpha_{d-1}=1}^{r_{d-1}} 
V_1(i_1, j_1, \alpha_1) 
V_2(\alpha_1, i_2, j_2, \alpha_2)
\ldots
V_{d-1}(\alpha_{d-2}, i_{d-1}, j_{d-1}, \alpha_{d-1})
V_d(\alpha_{d-1}, i_d, j_d), 
\end{equation}

where $0\leq i_k \leq n_k-1$, $0\leq j_k \leq m_k-1$, $V_j$ such that $j\in [2\ldots d-1]$ is a four-dimensional tensor with size $r_{j-1}\times n_j \times m_j \times r_{j}$, $V_1$ and $V_d$ have the shapes $n_1 \times m_1 \times r_{1}$ and $r_{d} \times n_d \times m_d$ respectively.

In the short form:
\begin{equation}\label{eq:tt-matrix-short}
M_{\substack{i_{1}\ldots i_{d}\\
		j_{1}\ldots j_{d}}}
=
Q_1(i_1, j_1)Q_2(i_2, j_2)\ldots Q_{d-1}(i_{d-1}, j_{d-1})Q_d(i_d, j_d), 
\end{equation}
where $Q_k(i_k,j_k)$ is a matrix with shape $r_{k-1}\times r_k$, $k\in[1\ldots d]$, pair $i_k, j_k$ is treated as one ``long index''. It is important to mention that cores $Q_k$ can be considered as a block matrix with a shape $r_{k-1}\times r_k$ and a block size $n_k\times m_k$.

The equation for \emph{erank} calculation for QTT-format is the following:
\begin{equation}
S = 2 r_1 + 2\sum_{i=2}^{d-1} r_{i-1}  r_i + 2 r_{d-1} = 4r_e+2r_e^2\left(d-2\right).
\end{equation}

The corollary of \eqref{eq:tt-matrix} is that if all TT-ranks of $M$ are equal to $1$, than $M$ may be present as a \emph{Kronecker product} of $d$ matrices:
\begin{equation}\label{eq:kron_prop}
M = M_1 \otimes M_2 \otimes \ldots \otimes M_{d-1}\otimes M_d,
\end{equation}
where $M_k$ is a matrix with the shape $n_k\times m_k$, $k\in[1\ldots d]$.

The storage cost and costs for basic operations in TT-format are bounded by $dnr^{\alpha}$, with $\alpha\in\{2, 3\}$ where $n\geq \max(n_1, \ldots n_d)$, $r\geq \max(r_1, \ldots r_{d-1})$. If $r$ is bounded in this case complexity is linear in $d$. For more information about properties of the TT-format refer to~\cite{oseledets2011tensor}.

QTT is used to apply TT to low-dimension objects like vectors and matrices. 
Consider a vector $\textbf{v}\in R^{2^d}$. This vector can be reshaped into a tensor $V$ with a shape $\underbrace{2\times 2 \times \ldots \times 2}_\text{d times}$. After that we can obtain TT representation of $V$ as a TT representation of a $d$-dimension tensor. The reshaping of $\textbf{v}$ to a $d$-dimensional tensor is equal to encoding its indeces $0\leq i\leq 2^d-1$ into a binary format:
\begin{equation}
i = \overline{i_1, i_2, \ldots i_d}=\sum_{k=1}^{d}2^{k-1}i_{k} \ \leftrightarrow \ (i_1, i_2, \ldots i_d),
\end{equation}
where $i_k\in \{0, 1\}$, $k\in[1\ldots d]$. 

The same idea is used to represent a matrix $\textbf{M}\in R^{2^d\times 2^d}$ in the QTT-format.

The last important thing we need to explain in this section is a special operation between two QTT cores $K$ and $L$ which are presented as block matrices. The ``bowtie'' operation between two block matrices is like a usual matrix product of two matrices, but their elements (blocks) being multiplied by means of the Kronecker product~\cite{kazeev2012low}. The operation is denoted by the symbol $\bowtie$.
\begin{align}\label{eq:bowtie-definition}
\begin{split}
C = K \bowtie L &= 
					\begin{bmatrix}
						K_{1 1} & \dotsc & K_{1 r_2}\\
						\vdots & \ddots & \vdots\\
						K_{r_1 1} & \dotsc & K_{r_1 r_2}
					\end{bmatrix}
					\bowtie
					\begin{bmatrix}
						L_{1 1} & \dotsc & L_{1 r_3}\\
						\vdots & \ddots & \vdots\\
						L_{r_2 1} & \dotsc & L_{r_2 r_3}
					\end{bmatrix} = \\
				&=
					\begin{bmatrix}
						K_{1 1} \otimes L_{1 1} + \dotsc + K_{1 r_2}\otimes L_{r_2 1} & \dotsc & K_{1 1} \otimes L_{1 r_3} + \dotsc + K_{1 r_2 }\otimes L_{r_2 r_3}\\
						\vdots & \ddots & \vdots\\
						K_{r_1 1} \otimes L_{11} + \dotsc + K_{r_1 r_2}\otimes L_{r_2 1} & \dotsc & K_{r_1 1} \otimes L_{1 r_3} + \dotsc + K_{r_1 r_2}\otimes L_{r_2 r_3}
					\end{bmatrix}.
\end{split}
\end{align}

In~\cite{kazeev2012low} was shown that a matrix $\textbf{M}\in R^{2^d\times 2^d}$ may be presented in the following form using the ``bowtie'' operation:
\begin{equation}\label{eq:QTT_bowtie}
	M = J_1\bowtie\dotsc\bowtie J_d,
\end{equation}

where $J_i$ is considered as a block matrix with size $r_{i-1}\times r_{i}$ and a block size $n_i \times m_i$, where $i \in [1, d]$.

\section{Z-ordering}
\subsection{Matrix ordering}\label{sec:z-ordering}

Consider a quadrilateral area $2^d\times 2^d$ with a quadrangle grid with $4^d$ nodes. Coordinates of some node in this grid are denoted by a pair of integers $\left(i, j\right)$, where $i, j \in \left[0, 2^{d}-1\right]$.

In this paper, we consider two types of nodes ordering, which are presented in Figure~\ref{img:ordering}:
\begin{enumerate}
	\item \emph{Canonical ordering}. A node with coordinates $(i, j)$ has its
		index calculated by the following formula:
		\begin{equation}\label{eq:canonical_ordering}
			\mathcal L\left(i, j\right) = i + 2^{d} j;
		\end{equation}
		
	\item \emph{Z-ordering}. Let us represent $\left(i, j\right)$ as:
		\begin{equation}\label{eq:z_ordering_power_pf_two}
					i = \sum_{k=1}^{d}2^{k-1}i_{k} \ \leftrightarrow \ (i_1, i_2, \ldots i_d),\ 
					j = \sum_{k=1}^{d}2^{k-1}j_{k} \ \leftrightarrow \ (j_1, j_2, \ldots j_d).
		\end{equation}
		The z-order index of a node at the point $\left(i, j\right)$ is calculated according to the following formula:
		\begin{equation}\label{eq:z_ordering_get_pos}
			\mathcal Z\left(i, j\right) = i_1 + 2 j_1 + 4 i_2 + 8 j_2 + \dotsc + 2^{2d-2} i_d + 2^{2d -1}j_d = \sum_{k=1}^{d} 2^{2k-2} i_k + \sum_{k=1}^{d} 2^{2k-1} j_k.
		\end{equation}		
\end{enumerate}

\begin{figure}[h]
	\centering
	\includegraphics[width=0.6\textwidth]{./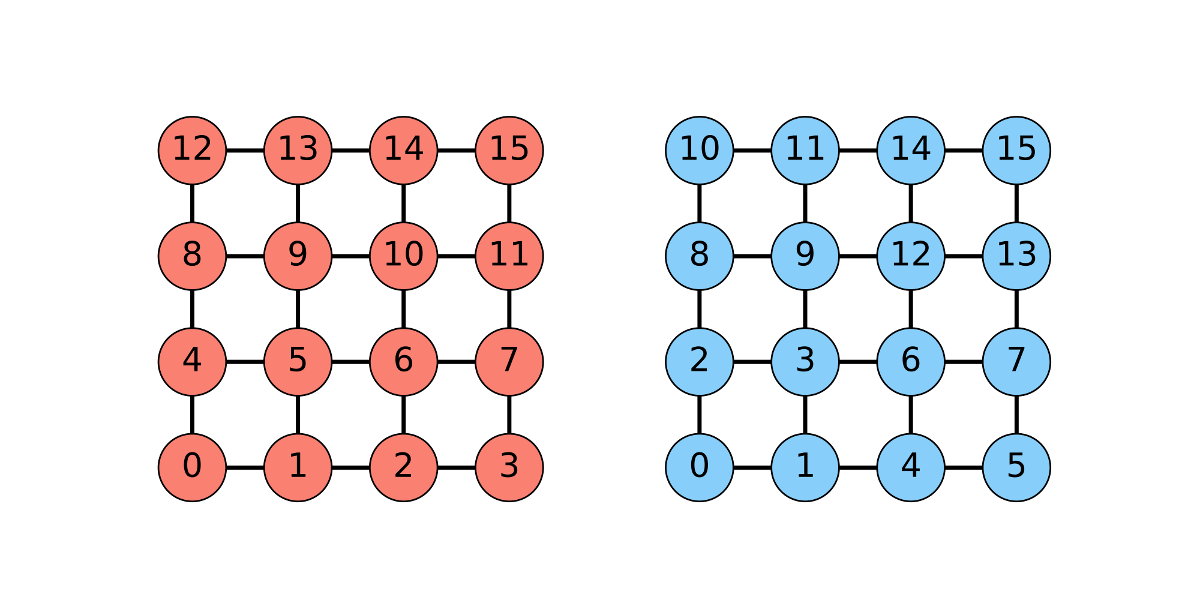}
	\caption{The ordering examples: left -- the canonical ordering, right -- the z-ordering.}
	\label{img:ordering}
\end{figure}

\subsection{Z-kron operation}\label{sec:z-kron}

Suppose we have two QTT-matrices:
\begin{equation}
	K_{\substack{i_1, \dotsc, i_d\\ j_1,\dotsc, j_d}} = K_1\left(i_1, j_1\right) \dotsc K_d\left(i_d, j_d\right),\ 
	L_{\substack{i'_1, \dotsc, i'_d\\ j'_1,\dotsc, j'_d}} = L_1\left(i'_1, j'_1\right) \dotsc L_d\left(i'_d, j'_d\right).
\end{equation}

Kronecker product of two $d$-dimensional QTT-matrices is a $2d$-dimensional QTT-matrix:
\begin{equation}
	M_{\substack{i_1,\dots, i_d, i'_1,\dotsc, i'_d\\ j_1,\dots, j_d, j'_1,\dotsc, j'_d}} =
	K_{\substack{i_1, \dotsc, i_d\\ j_1,\dotsc, j_d}}
	L_{\substack{i'_1, \dotsc, i'_d\\ j'_1,\dotsc, j'_d}} = 
	K_1\left(i_1, j_1\right) \dotsc K_d\left(i_d, j_d\right)
	L_1\left(i'_1, j'_1\right) \dotsc L_d\left(i'_d, j'_d\right).
\end{equation}

Let us define a new operation $\oslash$ -- z-kron operation, which applied to a pair of $d$-dimensional QTT-matrices yields a $d$-dimensional TT-matrix of size $\mathbb R^{4^d \times 4^d}$. The elements of this TT-matrix are defined in the similar way:
\begin{equation}
	M_{\substack{z_1, \dotsc, z_d\\ z'_1, \dotsc, z'_d}} = 
	K_{\substack{i_1, \dotsc, i_d\\ j_1,\dotsc, j_d}} L_{\substack{i'_1, \dotsc, i'_d\\ j'_1,\dotsc, j'_d}}\mbox{, where }z_k=i_k+2j_k,\ k\in[1\ldots d].
\end{equation}

\begin{theorem}\label{th:z-kron}
	The TT-tensor $M = K \oslash L$ can be expressed in terms of cores:
	\begin{equation}
		M_{\substack{z_1, \dotsc, z_d\\ z'_1, \dotsc, z'_d}}=
		M_1\left(z_1, z'_1\right)M_2\left(z_2, z'_2\right)\dotsc M_d\left(z_d, z'_d\right),
	\end{equation}
	where cores $M_k$ are obtained by regular matrix Kronecker product $M_k\left(z_k, z'_k \right)=K\left(i_k, j_k\right)\otimes L\left(i'_k, j'_k\right)$ and $z_k=i_k+2j_k$.
\end{theorem}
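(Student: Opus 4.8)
The plan is to prove the identity by unwinding the definitions of both sides and matching terms. The key observation is that the claimed statement is essentially a bookkeeping fact about how the ``bowtie'' product interacts with the Kronecker product: forming a Kronecker product of two $d$-dimensional QTT matrices and then \emph{reindexing} the doubled modes $(i_k, i'_k)$ into single modes $z_k = i_k + 2j_k$, $z'_k = j_k \ldots$ (more precisely, pairing the $k$-th modes of $K$ and $L$ together) is the same as taking the ordinary matrix Kronecker product of the corresponding cores. So the whole content is that Kronecker product of cores is compatible with reshuffling the mode order from $(i_1,\dots,i_d,i'_1,\dots,i'_d)$ to $(i_1,i'_1,i_2,i'_2,\dots,i_d,i'_d)$.

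First I would write out $M_{\substack{i_1,\dots,i_d,i'_1,\dots,i'_d\\ j_1,\dots,j_d,j'_1,\dots,j'_d}} = K_1(i_1,j_1)\cdots K_d(i_d,j_d)\, L_1(i'_1,j'_1)\cdots L_d(i'_d,j'_d)$, which is a product of $2d$ matrix-valued factors (scalars, since the outer ranks are $1$, or more generally a product that telescopes). Since the $K$-factors and $L$-factors act on disjoint tensor rails, I would argue that this scalar equals the product $\bigl(K_1(i_1,j_1)\otimes L_1(i'_1,j'_1)\bigr)\bigl(K_2(i_2,j_2)\otimes L_2(i'_2,j'_2)\bigr)\cdots\bigl(K_d(i_d,j_d)\otimes L_d(i'_d,j'_d)\bigr)$ by the mixed-product property of the Kronecker product, $(A\otimes B)(C\otimes D) = (AC)\otimes(BD)$, applied $d-1$ times: the product of the Kronecker-ed cores telescopes to $\bigl(K_1(i_1,j_1)\cdots K_d(i_d,j_d)\bigr)\otimes\bigl(L_1(i'_1,j'_1)\cdots L_d(i'_d,j'_d)\bigr)$, and since each of the two factors is a scalar (rank-one boundary cores), the Kronecker product is just their product, which is exactly $K_{\cdots}L_{\cdots}$. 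Then I would define $M_k(z_k, z'_k) := K_k(i_k,j_k)\otimes L_k(i'_k,j'_k)$ with the index correspondence $z_k = i_k + 2 j_k$ on the row side and analogously on the column side, and observe that this is a valid QTT core of mode size $4\times 4$ whose block structure is precisely the Kronecker product of the $2\times 2$ blocks of $K_k$ and $L_k$ — i.e., exactly the reindexing prescribed by the z-order formula \eqref{eq:z_ordering_get_pos} restricted to one level. Checking that $z_k = i_k + 2j_k$ is the correct local digit interleaving is just reading off one term of the sum in \eqref{eq:z_ordering_get_pos}.

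The remaining step is to confirm that the cores $M_1,\dots,M_d$ so defined indeed multiply to give $M_{\substack{z_1,\dots,z_d\\ z'_1,\dots,z'_d}}$ with the z-order indexing, which follows immediately from the telescoping computation above once the index dictionary $z_k \leftrightarrow (i_k, j_k)$ is fixed; the ranks of $M$ are the products $r_{k}^{K} r_{k}^{L}$ of the ranks of $K$ and $L$, consistent with the Kronecker-product core sizes. I expect the only real subtlety — and the ``hard part,'' such as it is — to be entirely notational: one must be careful that the pairing is $(i_k,j_k)$ from $K$ with $(i'_k,j'_k)$ from $L$ at the \emph{same} level $k$ (so that the z-digit interleaving is level-by-level), and that the block-matrix interpretation of a QTT core (a $r_{k-1}\times r_k$ array of $2\times 2$ blocks) transforms correctly under $\otimes$ into an $r_{k-1}^K r_{k-1}^L \times r_k^K r_k^L$ array of $4\times 4$ blocks — this is where the mixed-product property and the definition of $\bowtie$ in \eqref{eq:bowtie-definition} must be invoked consistently. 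No estimate or limiting argument is needed; the theorem is an exact algebraic identity and the proof is a direct verification.
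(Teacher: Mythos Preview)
Your proposal is correct and follows essentially the same route as the paper: apply the mixed-product identity $(A\otimes B)(C\otimes D)=(AC)\otimes(BD)$ repeatedly to telescope $\prod_k \bigl(K_k(i_k,j_k)\otimes L_k(i'_k,j'_k)\bigr)$ into $\bigl(\prod_k K_k\bigr)\otimes\bigl(\prod_k L_k\bigr)$, then note that a Kronecker product of two scalars is their ordinary product. The paper's proof is just these two lines; your additional remarks about the z-digit indexing and block-matrix bookkeeping are sound but not needed for the argument itself.
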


\begin{proof}
	Direct calculation gives
	\begin{align}
		\begin{split}
			M_1\left(z_1, z_1'\right)M_2\left(z_2, z_2'\right)\ldots M_d\left(z_d, z_d'\right) &= \left(K_1\left(i_1, j_1\right)\otimes L_1\left(i'_1, j'_1\right)\right)\ldots \left(K_d\left(i_d, j_d\right)\otimes L_d\left(i'_d, j'_d\right)\right)=\\
			&= \left(K_1\left(i_1, j_1\right)\ldots K_d\left(i_d, j_d\right)\right)\otimes\left(L_1\left(i'_1, j'_1\right)\ldots L_d\left(i'_d, j'_d\right)\right).
		\end{split}
	\end{align}
		
	The Kronecker product of two numbers is just a multiplication, that is why:
	\begin{align}
		\begin{split}
			\left(K_1\left(i_1, j_1\right)\ldots K_d\left(i_d, j_d\right)\right)\otimes\left(L_1\left(i'_1, j'_1\right)\ldots L_d\left(i'_d, j'_d\right)\right)
			&= K_1\left(i_1, j_1\right)\ldots K_d\left(i_d, j_d\right)L_1\left(i'_1, j'_1\right)\ldots L_d\left(i'_d, j'_d\right)\\
			&=
			M_{\substack{z_1, \dotsc, z_d\\ z'_1, \dotsc, z'_d}}.
		\end{split}
	\end{align}
\end{proof}

\subsection{Generation of a Z-order meshgrid in QTT-format}\label{sec:z-order-mesh-grid}
A \emph{meshgrid} is a standard tensor traversal generator which is avaible at  scientific libraries like numpy. Mesh grid can be counstructed using tensor operations. The mesh grid with the canonical ordering for a 2D matrix is constructed as:
\begin{equation}
	\io = L \otimes I,\ 	\jo = I \otimes L,
\end{equation}	

where $\io$ matrix indeces along the first dimension, $\jo$ matrix indeces along the second dimension,  $L$ is a vector, which contains a range from $0$ to $2^d-1$, $I$ is a vector with ones with a length $2^d$. 

According to Theorem \ref{th:z-kron} if we replace $\otimes$ to $\oslash$ we will obtain a mesh grid matrices with z-ordering:
\begin{equation}
	\iz = L \oslash I,\ 	\jz = I \oslash L.
\end{equation}

	\section{Matrix formulation of a concatenation}
\subsection{Concatenation matrices}\label{sc:concatination_matrix}

In Section~\ref{sec:solution_concatenation} the procedure of subdomain concatenation was expressed in terms of the \emph{connectivity matrix} $\Pi$. In this section we attempt to express $\Pi$ in a simple form that leads directly to an efficient TT representation.

If $m\neq p$, $\Pi_{mp}$ is defined as:
\begin{equation}
\left(\Pi_{mp}\right)_{\mathcal {IJ}}=\begin{cases}
1,\mbox{ if } \mathcal{I}^{(m)}\sim \mathcal{J}^{(p)}\\
0,\mbox{ otherwise}.
\end{cases}
\end{equation}
Otherwise:
\begin{equation}\label{eq:Pi_mm}
\Pi_{mm} = -\sum_{m\neq p} \Pi_{mp}\Pi_{pm}.
\end{equation}
Here $\mathcal I^{\left(m\right)} \sim \mathcal J^{\left(p\right)}$ is short for 
``node $\mathcal I$ from $\Omega^{(m)}$ should be concatenated with node $\mathcal J$ from $\Omega^{(p)}$''. $\Pi_{mm}$ is a diagonal matrix. The $i$'th number on the diagonal equals to the number of nodes $i$'th node is joined to.

The following possible cases of concatenation between two subdomains $m$ and $p$ exist:
\begin{enumerate}
	\item subdomains $m$ and $p$ do not share any nodes, $\Pi_{mp}=O$;
	\item subdomains $m$ and $p$ share one node (concatenated by vertex);
	\item subdomains $m$ and $p$ share nodes along a side (concatenated by a side), then they share $2^{d}$ nodes.
\end{enumerate}

For cases 2 and 3 there exist $16$ possible combinations for matrix $\Pi_{mp}$ since we can concatenate any side (vertex) of $\Omega^{(m)}$ with any side (vertex) of $\Omega^{(p)}$. The key idea of reducing the number of cases is to decompose the mapping $\Pi$ into a composition of three:
\begin{enumerate}
\item Map from $\Omega^{(m)}$ side (vertex) to some ``standard'' side (vertex).
\item Swap the standard side, that is map it onto itself in a reversed order. This step is not needed for the concatenation by vertex.
\item Map from the ``standard'' side (vertex) to $\Omega^{(p)}$ side (vertex).
\end{enumerate}

Concatenation by side is the most interesting operation. Let us focus on it.
According to the matrix $\Pi_{mp}$ can be represented as:
\begin{equation}\label{eq:Pi}
\Pi_{mp}=\left[\Psi_m\right]^T S \Psi_p,
\end{equation}
where $S$ is an exchange matrix a special case of a permutation matrix, where the 1 elements reside on the counterdiagonal and all other elements are zero. Now there are only four possible types of $\Psi$ matrices: one for each side of the quadrangle. Let them be $\Psi_B, \Psi_R, \Psi_T, \Psi_L$. The size of these matrices is $2^d \times 4^d$. Figure~\ref{img:concatenation_example} illustrates the concatenation of the bottom side of $\Omega^{(m)}$ with the left side of $\Omega^{(p)}$.

\begin{figure}[h]
	\centering
	\includegraphics[width=.5\textwidth]{./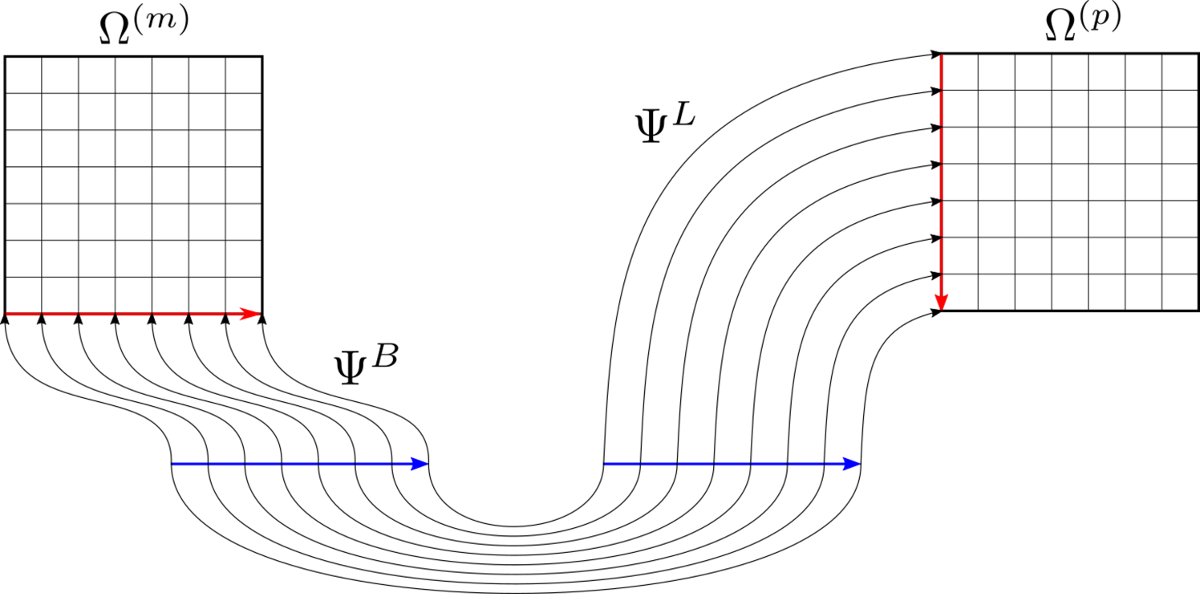}
	\caption{Mapping from the $\Omega^{(m)}$ bottom side to the $\Omega^{(p)}$ left side as a superposition of three mappings: transpose of $\Psi_B$, swap and finally $\Psi_L$.}
	\label{img:concatenation_example}
\end{figure}

Similarly for the vertex concatenation only 4 types of row vectors $\psi$ of size $1 \times 4^d$ are necessary. The matrix $\Pi_{mp}$ can be represented as
\[
\Pi_{mp} = \psi_m^\top \psi_p.
\]

\subsection{TT representation for a concatenation by side}\label{sec:concatination_by_side}

Let us now focus on the TT-representation for each of the four types of matrix $\Psi \in \mathbb R^{2^d \times 4^d}$. The first index $s$ of $\Psi$ enumerates nodes on the standard side, the second index $z$ of $\Psi$ enumerates subdomain nodes in a z-order. First, we quantize both of the indices:
\begin{gather*}
s = \sum_{k=1}^d 2^{k-1} s_k \leftrightarrow (s_1, s_2, \dots, s_d),\\
z = \sum_{k=1}^d 4^{k-1} z_k \leftrightarrow (z_1, z_2, \dots, z_d).
\end{gather*}
The matrix $\Psi$ is now reshaped into an $\underbrace{(2\times 2 \times \ldots \times 2)}_\text{d times}\times \underbrace{(4\times 4 \times \ldots \times 4)}_\text{d times}$ TT-matrix, where each element is indexed as:
\begin{equation}
\Psi_{\substack{s_1 s_2 \dotsc s_d\\ z_1 z_2 \dotsc z_d}},\ s_k\in\left\{0,1\right\},\ z_k \equiv i_k+2j_k,
\end{equation}
where $i_k, j_k$ are defined similar to the formula~\eqref{eq:z_ordering_power_pf_two}, $k\in[1\ldots 2^d]$.

According to~\eqref{eq:tt-matrix-short} each element of $\Psi$ can be presented in the TT-format in the following form:
\begin{equation}
	\Psi_{\substack{s_1 s_2 \dotsc s_d\\ z_1 z_2 \dotsc z_d}} = H_1\left(s_1, z_1\right)H_2\left(s_2, z_2\right)\dotsc H_d\left(s_d, z_d\right).
\end{equation}

In terms of the ``bowtie'' operation~\eqref{eq:bowtie-definition}-\eqref{eq:QTT_bowtie} looks like:

\begin{equation}\label{eq:Psi-bowtie}
	\Psi = H_1\bowtie \dotsc\bowtie H_d.
\end{equation}
Here $H_k$, $k = 1, \ldots, d$ are the cores of the TT decomposition of $\Psi$. 
It appears that each matrix $\Psi$ has TT-rank equal to $1$ and it cores can be explicitly given by
\begin{equation}
H_k = \begin{bmatrix}
\begin{pmatrix}
B & R & L & T \\
L & B & T & R
\end{pmatrix}
\end{bmatrix},
\end{equation} 
where $B, T, L, R$ are equal to 1 if the concatenation is performed on the bottom, the top, the left, or the right side, respectably, or, 0, otherwise.

For the defined enumerations the cores are as follows:
\begin{itemize}
	\item \emph{bottom side enumeration}:
	\begin{equation}\label{eq:bottom_core}
	H_k = \begin{bmatrix}
				\begin{pmatrix}
					1 & 0 & 0 & 0\\
					0 & 1 & 0 & 0
				\end{pmatrix}
			\end{bmatrix},\ k = 1, \ldots, d;
	\end{equation}	
	
	\item \emph{right side enumeration}:
	\begin{equation}\label{eq:right_core}
	H_k = \begin{bmatrix}
				\begin{pmatrix}
					0 & 1 & 0 & 0 \\
					0 & 0 & 0 & 1
				\end{pmatrix}
			\end{bmatrix},\ k = 1, \ldots, d;
	\end{equation}
	
	\item \emph{top side enumeration}:
	\begin{equation}\label{eq:top_core}
	H_k = \begin{bmatrix}
				\begin{pmatrix}
					0 & 0 & 0 & 1\\
					0 & 0 & 1 & 0
				\end{pmatrix}
			\end{bmatrix},\ k = 1, \ldots, d;
	\end{equation}
	
	\item \emph{left side enumeration}:
	\begin{equation}\label{eq:left_core}
	H_k = \begin{bmatrix}
				\begin{pmatrix}
					0 & 0 & 1 & 0 \\
					1 & 0 & 0 & 0
				\end{pmatrix}
			\end{bmatrix},\ k = 1, \ldots, d.
	\end{equation}
\end{itemize}

Note, to obtain $S\Psi$ from $\Psi$ it is sufficient to simply swap rows in each $H_k$.

\begin{theorem}
	\label{th:QTT-Psi}
	Matrix $\Psi$ can be represented as a TT matrix with identical cores equal to \eqref{eq:bottom_core},  \eqref{eq:right_core}, \eqref{eq:top_core}, \eqref{eq:left_core} for:
	
	\begin{itemize}
		\item \emph{bottom side enumeration}:
		\begin{equation}
		\Psi = \begin{bmatrix}
						\begin{pmatrix}
						1 & 0 & 0 & 0\\
						0 & 1 & 0 & 0
						\end{pmatrix}
					\end{bmatrix}^{\bowtie^d}
					=
						\begin{pmatrix}
						1 & 0 & 0 & 0\\
						0 & 1 & 0 & 0
						\end{pmatrix}^{\otimes^d};
		\end{equation}	
		
		\item \emph{right side enumeration}:
		\begin{equation}
		\Psi = \begin{bmatrix}
					\begin{pmatrix}
						0 & 1 & 0 & 0 \\
						0 & 0 & 0 & 1
					\end{pmatrix}
					\end{bmatrix}^{\bowtie^d}
					=
					\begin{pmatrix}
						0 & 1 & 0 & 0 \\
						0 & 0 & 0 & 1
					\end{pmatrix}^{\otimes^d};
		\end{equation}
		
		\item \emph{top side enumeration}:
		
		\begin{equation}
		\Psi = \begin{bmatrix}
					\begin{pmatrix}
					0 & 0 & 0 & 1\\
					0 & 0 & 1 & 0
					\end{pmatrix}
					\end{bmatrix}^{\bowtie^d}
					=
					\begin{pmatrix}
					0 & 0 & 0 & 1\\
					0 & 0 & 1 & 0
					\end{pmatrix}^{\otimes^d};
		\end{equation}
		
		\item \emph{left side enumeration}:
		\begin{equation}
		\Psi = \begin{bmatrix}
					\begin{pmatrix}
					0 & 0 & 1 & 0 \\
					1 & 0 & 0 & 0
					\end{pmatrix}
					\end{bmatrix}^{\bowtie^d}
					=
					\begin{pmatrix}
					0 & 0 & 1 & 0 \\
					1 & 0 & 0 & 0
					\end{pmatrix}^{\otimes^d}.
		\end{equation}
	\end{itemize}
	
\end{theorem}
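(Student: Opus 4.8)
The plan is to prove Theorem~\ref{th:QTT-Psi} by reducing it to two facts already established: the explicit form of the single cores $H_k$ given in \eqref{eq:bottom_core}--\eqref{eq:left_core}, and the general bowtie-product representation \eqref{eq:Psi-bowtie} of a TT-matrix. Since the theorem asserts that $\Psi$ has all TT-ranks equal to $1$, the right bowtie-factor indices $r_0 = r_1 = \dots = r_d = 1$, so each core $H_k$ is a $1\times 1$ block matrix whose single block is a $2\times 4$ matrix. Under these rank-$1$ conditions the bowtie product $H_1 \bowtie \dots \bowtie H_d$, by its definition \eqref{eq:bowtie-definition}, collapses: the outer block-matrix multiplication is trivial (all dimensions are $1$), and what remains is exactly the Kronecker product of the inner blocks. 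This is precisely the corollary noted after \eqref{eq:tt-matrix} of the excerpt: a TT-matrix with all ranks equal to $1$ is a Kronecker product of its cores. Hence $\Psi = H_1 \otimes \dots \otimes H_d = H^{\otimes d}$, which gives the stated right-hand sides once we substitute the explicit $H_k$.

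First I would set up the index quantization carefully: for each enumeration (bottom, right, top, left) I must check that the proposed identical core $H_k$ indeed reproduces the entries $\Psi_{s_1\dots s_d, z_1 \dots z_d}$. Concretely, $\Psi$ maps a standard-side node index $s \in [0,2^d-1]$ to the z-order index $z \in [0,4^d-1]$ of the corresponding subdomain node on that side. For the bottom side, the subdomain node with side-coordinate $s$ sits at grid position $(i,j) = (s, 0)$, so in the quantized representation $i_k = s_k$ and $j_k = 0$ for all $k$, giving $z_k = i_k + 2 j_k = s_k$. Thus $\Psi_{s,z}$ is $1$ iff $z_k = s_k$ for every $k$, i.e. iff $\prod_k [\text{$z_k = s_k$}] = 1$; and the $2\times 4$ matrix $\left(\begin{smallmatrix}1&0&0&0\\0&1&0&0\end{smallmatrix}\right)$ has $(s_k, z_k)$-entry equal to $[z_k = s_k]$ exactly on the relevant range $s_k \in \{0,1\}$, $z_k \in \{0,1,2,3\}$. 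Multiplying these $1\times1$ blocks (i.e. taking their Kronecker product) yields $\prod_k [z_k = s_k]$, which is the claimed entry of $\Psi$. The remaining three cases are the same computation with the appropriate boundary locus: right side is $(i,j) = (2^d-1, s)$ so $i_k = 1$, $j_k = s_k$, $z_k = 1 + 2 s_k \in \{1,3\}$; top side is $(i,j) = (s, 2^d-1)$ so $z_k = s_k + 2 = \{2,3\}$ but with the side traversal reversed relative to $s$ in the way encoded by swapping the two matrix rows (this is the parenthetical remark before the theorem about $S\Psi$); left side is $(i,j) = (0, s)$ so $z_k = 2 s_k \in \{0,2\}$.

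The main obstacle — really the only place where care is needed — is getting the orientation and the row/column conventions of the four boundary enumerations consistent between the geometric picture (Figure~\ref{img:concatenation_example}) and the algebraic cores \eqref{eq:bottom_core}--\eqref{eq:left_core}. One has to pin down, for each side, (i) which physical grid coordinate plays the role of the ``free'' side-parameter $s$, (ii) whether increasing $s$ runs in the same or opposite direction as the underlying grid index, and (iii) that this choice is uniform across all $d$ quantization levels so that the same core $H_k$ works for every $k$ — this uniformity is what makes the TT-rank equal to $1$ and is exactly the content of the ``standard side'' construction in Section~\ref{sc:concatination_matrix}. Once the bookkeeping is fixed, the rest is a one-line invocation of the rank-$1$/Kronecker corollary.

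\begin{proof}
	By \eqref{eq:Psi-bowtie} the matrix $\Psi$ is represented as the bowtie product $H_1 \bowtie \dots \bowtie H_d$ of the cores given in \eqref{eq:bottom_core}--\eqref{eq:left_core}. In each of the four cases the core $H_k$ is a block matrix of block-size $r_{k-1}\times r_k$ with $r_0 = \dots = r_d = 1$; that is, each $H_k$ is a single $2\times 4$ block. For such rank-one cores the definition \eqref{eq:bowtie-definition} of $\bowtie$ has a single term in each (single) entry, and the outer $1\times1$ block-matrix product is trivial, so
	\[
		H_1 \bowtie \dots \bowtie H_d = H_1 \otimes \dots \otimes H_d,
	\]
	which is the statement of the Kronecker corollary following \eqref{eq:tt-matrix}. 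It remains to verify that the proposed identical core reproduces the entries of $\Psi$ in each enumeration, i.e. that the $2\times 4$ matrix displayed is the level-$k$ factor of the Kronecker decomposition of $\Psi$.

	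Recall that $\Psi$ sends the standard-side node index $s = \sum_k 2^{k-1}s_k$ to the z-order index $z = \sum_k 4^{k-1} z_k$ of the corresponding subdomain node, where $z_k = i_k + 2 j_k$ and $(i,j)$ is the grid position of that node. For the \emph{bottom} side the node with side-parameter $s$ lies at $(i,j) = (s, 0)$, hence $i_k = s_k$, $j_k = 0$, and $z_k = s_k$ for all $k$; so $\Psi_{s,z} = \prod_{k=1}^d [\,z_k = s_k\,]$. The $(s_k,z_k)$-entry of $\left(\begin{smallmatrix}1&0&0&0\\0&1&0&0\end{smallmatrix}\right)$, for $s_k\in\{0,1\}$ and $z_k\in\{0,1,2,3\}$, is precisely $[\,z_k = s_k\,]$, and the product of these entries over $k$ equals $\Psi_{s,z}$. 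This proves the bottom case. For the \emph{right} side the node is at $(i,j) = (2^d-1, s)$, so $i_k = 1$, $j_k = s_k$, $z_k = 1 + 2 s_k \in \{1,3\}$, and $\Psi_{s,z} = \prod_k [\,z_k = 1 + 2 s_k\,]$, which is the product of the $(s_k,z_k)$-entries of $\left(\begin{smallmatrix}0&1&0&0\\0&0&0&1\end{smallmatrix}\right)$. For the \emph{top} side the node is at $(i,j) = (s', 2^d-1)$ where $s'$ is $s$ with reversed side-orientation; reversing orientation is implemented by exchanging the two rows of the core, and with $j_k = 1$ we get $z_k \in \{2,3\}$, giving the core $\left(\begin{smallmatrix}0&0&0&1\\0&0&1&0\end{smallmatrix}\right)$. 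For the \emph{left} side the node is at $(i,j) = (0, s')$ with $s'$ again the reversed parameter, $i_k = 0$, $z_k = 2 j_k \in \{0,2\}$, giving $\left(\begin{smallmatrix}0&0&1&0\\1&0&0&0\end{smallmatrix}\right)$. In every case $\Psi$ factors as the $d$-fold Kronecker power of the displayed $2\times 4$ matrix, which, combined with the first paragraph, is exactly the asserted identity $\Psi = [H_k]^{\bowtie^d} = H_k^{\otimes^d}$.
\end{proof}
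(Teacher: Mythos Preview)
Your proof is correct and takes essentially the same route as the paper: identify the grid position $(i,j)$ of the $s$-th side node, quantize, observe that the resulting condition on $(s_k,z_k)$ is the same for every level $k$, and read off the $2\times4$ core. The only presentational difference is in the top and left cases: the paper writes the conditions $i=2^d-1-s,\ j=2^d-1$ and $i=0,\ j=2^d-1-s$ and computes directly $z_k=3-s_k$ and $z_k=2-2s_k$ via the bit-complement identity $2^d-1-s\leftrightarrow(1-s_1,\dots,1-s_d)$, whereas you describe this as ``reversed orientation implemented by row-swap'' without making the bit-complement step explicit. One extra line stating $s'_k=1-s_k$ would close that small gap; otherwise the arguments coincide.
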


\begin{proof}
Let us again recall the definition of $\Psi$:
\[
\Psi_{sz} = \begin{cases}
1, &\mbox{if } s \mbox { node of the standard side is mapped to node } z \mbox{ in 2D mesh},\\
0, &\mbox{otherwise}.
\end{cases}
\]
For each of the four types of $\Psi$ this expands into:
\begin{align}
(\Psi_B)_{sz} &= 	\begin{cases}
								1, &i = s, \; j = 0,\\
								0, &\mbox{otherwise}.
							\end{cases}\\
(\Psi_R)_{sz} &= 	\begin{cases}
								1, &i = 2^d - 1, \; j = s,\\
								0, &\mbox{otherwise}.
							\end{cases}\\
(\Psi_T)_{sz} &=  \begin{cases}
								1, &i = 2^d - 1 - s, \; j = 2^d - 1,\\
								0, &\mbox{otherwise}.
							\end{cases}\\
(\Psi_L)_{sz} &= 	\begin{cases}
								1, &i = 0, \; j = 2^d - 1 - s,\\
								0, &\mbox{otherwise}.
							\end{cases}
\end{align}

The conditions can be reformulated using $i_k, j_k$ and $s_k$ instead of $i, j, s$:
\begin{align}
i = s, \; j = 0 &\quad\Leftrightarrow\quad i_k = s_k, \; j_k = 0,\\
i = 2^d - 1, \; j = s &\quad\Leftrightarrow\quad i_k = 1, \; j_k = s_k,\\
i = 2^d - 1 - s, \; j = 2^d - 1 &\quad\Leftrightarrow\quad i_k = 1 - s_k, \; j_k = 1,\\
i = 0, \; j = 2^d - 1 - s &\quad\Leftrightarrow\quad i_k = 0, \; j_k = 1 - s_k.
\end{align}
Finally, the conditions can be simplified further using $z_k = i_k + 2 j_k$ instead of $i_k, j_k$:
\begin{itemize}
\item for $\Psi_B$: $z_k = s_k$;
\item for $\Psi_R$: $z_k = 1 + 2s_k$;
\item for $\Psi_T$: $z_k = 3 - s_k$;
\item for $\Psi_L$: $z_k = 2 - 2s_k$.
\end{itemize}
We can see that after the quantization the conditions separate for different $k$. This separation allows us to write $\Psi$ as a Kronecker product of $d$ matrices of size $2 \times 4$ with nonzeros given by above rules:
\[
\Psi = \begin{pmatrix}
	B & R & L & T \\
	L & B & T & R
	\end{pmatrix}^{\otimes^d}.
\]
The corresponding TT representation follows immediately.
\end{proof}

\subsection{TT representation for a concatenation by vertex}\label{sec:concatination_by_vertex}
If two subdomains share only one corner point, $\Psi$ can be derived from the fact that the first row of \eqref{eq:bottom_core}-\eqref{eq:left_core} defines the quad corners.
Consider the core $G_k$ which is derived from $H_k$ by taking the first row:
\begin{equation}
G_k = \begin{pmatrix}
			\begin{bmatrix}
				B & R & L & T
			\end{bmatrix}
		\end{pmatrix}.
\end{equation}

$B$ describes the left bottom corner ($LB$), $R$ describes the right bottom corner ($RB$), $L$ describes the left top corner ($LT$), $T$ describes the right top corner ($RT$). Using this notation, we get:
\begin{equation}
G_k = \begin{pmatrix}
			\begin{bmatrix}
				LB & RB & LT & RT
			\end{bmatrix}
		\end{pmatrix}.
\end{equation}

\subsection{Building the permutation matrix for a subdomain}\label{sec:permutation_matrix}
\newenvironment{comment}{}{}

In Section \ref{sec:stiffness_global} we mentioned the matrix $V_{c}$ a \emph{shift matrix} for corner coordinate $c=(x,y)$ in a finite element. Each matrix $V_{c}$ is built from two matrices $W_{0}$ and $W_{1}$. These matrices are shift matrices for a one-dimensional finite element. $W_{0}$ is an $2^d\times 2^d$ matrix defined as:	
\begin{equation}
W_0(e, p) =	\begin{cases}
						1, e=p, e\neq 2^d-1,\\
						0,\mbox{ otherwise}
					\end{cases}
\end{equation}

where $e$ is a number of a finite element in the subdomain, $p$ is a number of node.

$W_{1}$ is an $2^d\times 2^d$ matrix defined as:
\begin{equation}
W_1(e, p) =	\begin{cases}
						1, p=e+1,\\
						0,\mbox{ otherwise}.
					\end{cases}
\end{equation}

It is easy to see that both matrices $W$ with $2^d$ nodes can be written as:
\begin{align}
\begin{split}
W^{(d)} &= M_1 \otimes R_2 \otimes R_3 \otimes \dotsc \otimes R_{d-1} \otimes R_d + \\
&+ L_1 \otimes M_2 \otimes R_3 \otimes \dotsc \otimes R_{d-1} \otimes R_d +\\
&+ \dotsc + L_1 \otimes L_2 \otimes L_3 \otimes \dotsc \otimes 	L_{d-2}\otimes M_{d-1}\otimes R_d + \\
&+ L_1 \otimes L_2 \otimes \dotsc \otimes L_{d-1} \otimes M_d,
\end{split}
\end{align}

where $M_i$, $R_i$, $L_i$ for $W_0$ are equal to:
\begin{equation}
M_i = \begin{pmatrix}
1 & 0 \\
0 & 0
\end{pmatrix},\  
R_i = \begin{pmatrix}
1 & 0 \\
0 & 1
\end{pmatrix},\ 
L_i = \begin{pmatrix}
0 & 0\\
0 & 1
\end{pmatrix}.
\end{equation} 

For $W_1$:
\begin{equation}
M_i = \begin{pmatrix}
0 & 0 \\
1 & 0
\end{pmatrix},\  
R_i = \begin{pmatrix}
0 & 1 \\
0 & 0
\end{pmatrix},\ 
L_i = \begin{pmatrix}
1 & 0\\
0 & 1
\end{pmatrix}.
\end{equation}

Hence, according to the ~\cite{kazeev2012low} $W_0$ in the QTT-format for $d\geq 2$ is presented as a composition of ``bowtie'' operations between block matrices:
\begin{equation}\label{eq:W_0_QTT}
W_0 = 	\begin{bmatrix}
\begin{pmatrix}
0 & 0 \\
0 & 1
\end{pmatrix} 
&
\begin{pmatrix}
1 & 0\\
0 & 0
\end{pmatrix}
\end{bmatrix}
\bowtie
\begin{bmatrix}
\begin{pmatrix}
0 & 0 \\
0 & 1
\end{pmatrix} 
&
\begin{pmatrix}
1 & 0\\
0 & 0
\end{pmatrix} \\
\begin{pmatrix}
0 & 0 \\
0 & 0
\end{pmatrix} 
&
\begin{pmatrix}
1 & 0\\
0 & 1
\end{pmatrix} 
\end{bmatrix}^{\bowtie^{d-2}}
\bowtie
\begin{bmatrix}
\begin{pmatrix}
1 & 0\\
0 & 0
\end{pmatrix}\\
\begin{pmatrix}
1 & 0\\
0 & 1
\end{pmatrix} 					
\end{bmatrix}.
\end{equation}

The matrix $W_1$ in the QTT-format for $d\geq 2$ has the following form:
\begin{equation}
W_1 = 	\begin{bmatrix}
\begin{pmatrix}
1 & 0 \\
0 & 1
\end{pmatrix} 
&
\begin{pmatrix}
0 & 0\\
1 & 0
\end{pmatrix}
\end{bmatrix}
\bowtie
\begin{bmatrix}
\begin{pmatrix}
1 & 0 \\
0 & 1
\end{pmatrix} 
&
\begin{pmatrix}
0 & 0\\
1 & 0
\end{pmatrix} \\
\begin{pmatrix}
0 & 0 \\
0 & 0
\end{pmatrix} 
&
\begin{pmatrix}
0 & 1\\
0 & 0
\end{pmatrix} 
\end{bmatrix}^{\bowtie^{d-2}}
\bowtie
\begin{bmatrix}
\begin{pmatrix}
0 & 0\\
1 & 0
\end{pmatrix}\\
\begin{pmatrix}
0 & 1\\
0 & 0
\end{pmatrix} 					
\end{bmatrix}.
\end{equation}

In the edge-case of $d=1$ $W_0$ and $W_1$:
\begin{equation}\label{eq:W_edge-case}
W_0 = 	\begin{bmatrix}
\begin{pmatrix}
1 & 0\\
0 & 0
\end{pmatrix}
\end{bmatrix}
\ 
W_1 = 	\begin{bmatrix}
\begin{pmatrix}
0 & 1\\
0 & 0
\end{pmatrix}
\end{bmatrix}.
\end{equation}

From $W_0$ and $W_1$ we can construct 4 different $V_{c}$ for different corners in $\mathbb{K}$:
\begin{equation}
	\begin{array}{|r|r|c|}
		\hline
		x & y & V_c\\
		\hline
		-1 & -1 & W_0\otimes W_0\\
		-1 & 1 & W_0\otimes W_1\\
		1 & -1 & W_1\otimes W_0\\
		1 & 1 & W_1\otimes W_1\label{eq:V_11}\\
		\hline
	\end{array}
\end{equation}

According to Theorem \ref{th:z-kron} if we replace the Kronecker product $\otimes$ in \eqref{eq:V_11} to the z-kron product $\oslash$ we will get a shift matrix in z-order.

\section{Boundary conditions in subdomains}\label{sec:condition_on_subdomain}
After building all matrices from \eqref{eq:fem_concatinated} with the help of \eqref{eq:stiffness_interfaces}, \eqref{eq:Pi_mm}, \eqref{eq:Pi} and \eqref{eq:global_stiffness_matrix} we need to apply the Dirichlet condition to the external boundaries.

Let us define a vector $X$, which represent a boundary mask for a one-dimensional interval with $2^d$ points on it. There are 4 possible configurations of this vector. In the case of the Dirichlet condition from the beginning of the vector, we denote as $X_{DN}$ and this vector contains ones everywhere, except the first element. $X_{ND}$ for the Dirichlet at the end of interval contains ones everywhere, except the last element in a vector. For the Dirichlet condition at beginning and at the end we introduce $X_{DD}$, it contains ones everywhere, except the first and the last element. And without the Dirichlet condition it is $X_{NN}$, it contains ones only.

To construct a \emph{boundary mask} for a two-dimensional case we should take a Kronecker product of two possible configurations of a vector $X$. Here are a few examples for left, right, bottom and top sides:
\begin{equation}
	\begin{array}{c}
		m_{L} = X_{DN}\otimes X_{NN},\ m_{R} = X_{ND}\otimes X_{NN},\\
		m_{B} = X_{NN}\otimes X_{DN},\ m_{T} = X_{NN}\otimes X_{ND}.\label{eq:m_examples}
	\end{array}
\end{equation}

For example, $m_{L}$ has the following structure:
\begin{equation}
m_{L} = \begin{pmatrix}
				0 & 1 & \dotsc & 1 & \dotsc & 1\\
				\vdots & \vdots & \ddots & \vdots & \ddots & \vdots \\
				0 & 1 & \dotsc & 1 & \dotsc & 1
			\end{pmatrix}.
\end{equation}

This matrix is a map, to what node the Dirichlet condition is applied and to what is not. Again, according to Theorem \ref{th:z-kron} by the replacement $\otimes$ to $\oslash$ in ~\eqref{eq:m_examples} we will get $m$ in the z-order.

A mask $m$ is applied to the stiffness matrix $A$ and the force vector $f$ of each subdomain by the following formulas:
\begin{equation}\label{eq:stiffness_applied}
	A_{\text{applied}} = \mathrm{diag}\left(m\right)A+(E-\mathrm{diag}\left(m\right)),
\end{equation}

\begin{equation}\label{eq:force_applied}
	f_{\text{applied}} = m\circ f,
\end{equation}

the first summand of Equation~\eqref{eq:stiffness_applied} fills zero rows, which corresponds to the nodes on the boundary with the Dirichlet condition, and the second one sets ones to diagonal for convenience. $\circ$ is an \emph{element-wise product}.

\section{Construction of a final stiffness matrix and a force vector}\label{sec:build_final_matrix}

We work under the assumption that the number of subdomains is much less than the number of nodes in meshes. The \emph{final stiffness matrix} and the \emph{final force vector} from \eqref{eq:fem_concatinated} are built using the Kronecker product. For example from Figure~\ref{img:triangle} we have the following matrices: $B_{11}$, $B_{12}$, $B_{13}$, $B_{22}$, $B_{21}$, $B_{23}$, $B_{31}$, $B_{32}$, $B_{33}$. We build the final stiffness matrix in the following manner:
\begin{align}
\begin{split}
A &= \begin{pmatrix}
1 & 0 & 0\\
0 & 0 & 0\\
0 & 0 & 0
\end{pmatrix}
\otimes
B_{11}
+
\begin{pmatrix}
0 & 1 & 0\\
0 & 0 & 0\\
0 & 0 & 0
\end{pmatrix}
\otimes
B_{12}
+
\begin{pmatrix}
0 & 0 & 1\\
0 & 0 & 0\\
0 & 0 & 0
\end{pmatrix}
\otimes
B_{13} +\\
&+
\begin{pmatrix}
0 & 0 & 0\\
1 & 0 & 0\\
0 & 0 & 0
\end{pmatrix}
\otimes
B_{21}
+ 
\begin{pmatrix}
0 & 0 & 0\\
0 & 1 & 0\\
0 & 0 & 0
\end{pmatrix}
\otimes
B_{22} 
+ 
\begin{pmatrix}
0 & 0 & 0\\
0 & 0 & 1\\
0 & 0 & 0
\end{pmatrix}
\otimes
B_{23} +\\
&+ 
\begin{pmatrix}
0 & 0 & 0\\
0 & 0 & 0\\
1 & 0 & 0
\end{pmatrix}
\otimes
B_{31}
+ 
\begin{pmatrix}
0 & 0 & 0\\
0 & 0 & 0\\
0 & 1 & 0
\end{pmatrix}
\otimes
B_{32}
+ 
\begin{pmatrix}
0 & 0 & 0\\
0 & 0 & 0\\
0 & 0 & 1
\end{pmatrix}
\otimes
B_{33}.
\end{split}
\end{align}

And the force vector:
\begin{align}
\begin{split}
g = \begin{pmatrix}
1\\
0\\
0
\end{pmatrix}
\otimes
g^{(1)}
+
\begin{pmatrix}
0\\
1\\
0
\end{pmatrix}
\otimes
g^{(2)}
+
\begin{pmatrix}
0\\
0\\
1
\end{pmatrix}
\otimes
g^{(3)}.
\end{split}
\end{align}

	\section{Numerical experiments}

\begin{figure}[h]
	\centering
	\includegraphics[width=\textwidth]{./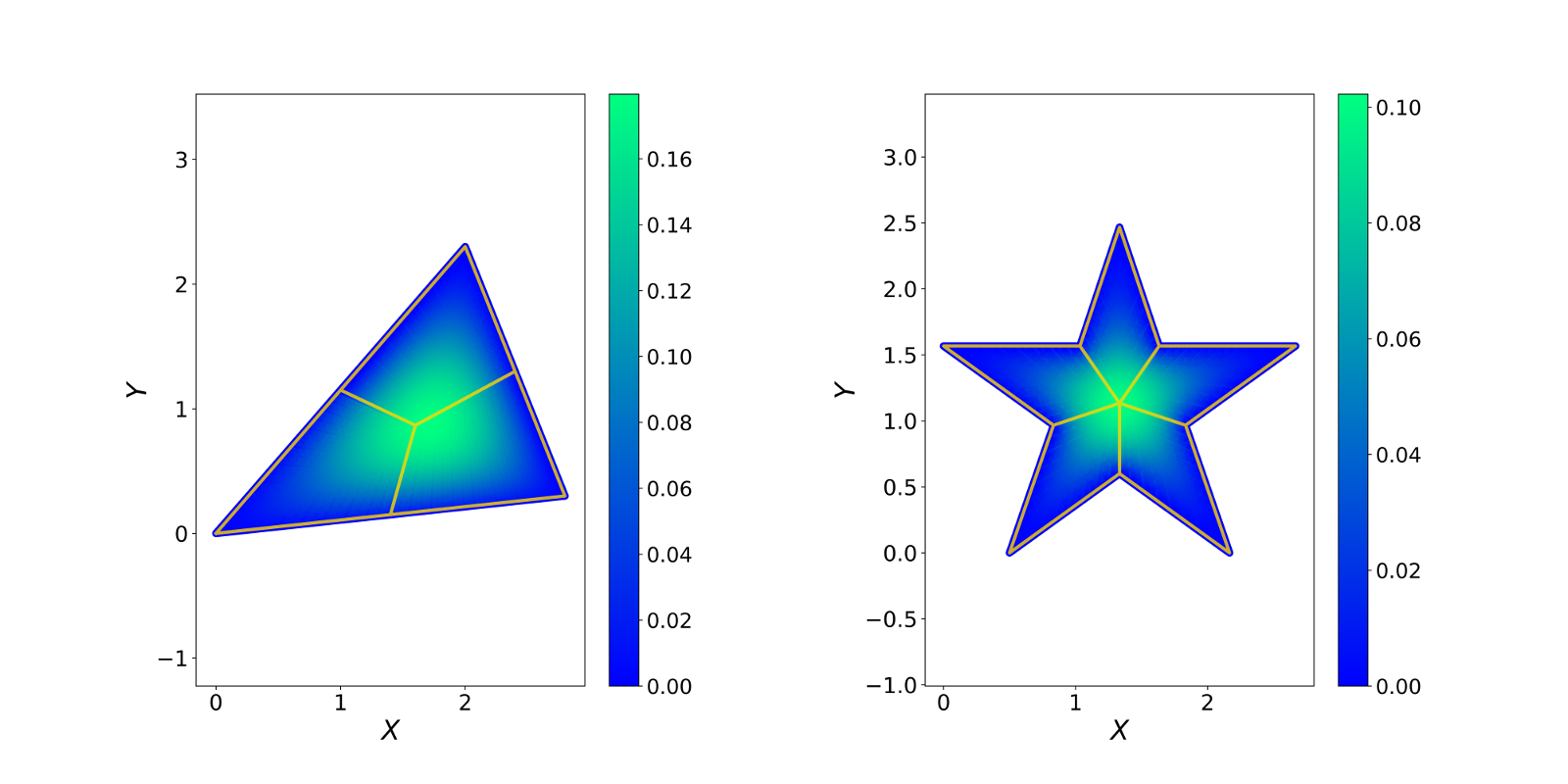}
	\caption{Experemental domains: the triangle (left) and the star (right). Yellow contour shows domain decomposition into quadrangle subdomains.}
	\label{img:subdomains}
\end{figure}

In this section a numerical expirement is provided -- we numerically solve a 2D dimensional Dirichlet problem. We chose two shapes for $\Omega$ -- a triangular shape and a star shape, as shown in Figure~\ref{img:subdomains}. Our goal is to study how numerical solutions behave, how eranks grow and to identify peak memory consumption with different approximation accuracies and mesh resolutions. We compare our approach which is described by Algorithm \ref{alg:final_alg} with FEniCS solution\cite{logg2012automated} on the same domain. To obtain the solution in QTT-format we use the AMEN solver \cite{dolgov2014alternating}. Memory consumption measurement was done by the memory profiler package for python 2.7.

Each numerical experiment consists of the following steps:

\begin{enumerate}
	\item Find a numerical solution of \eqref{eq:Dirichlet_problem} with various mesh sizes using FEniCS~\cite{logg2012automated}.
	\item Divide the domain into quadrangle subdomains, according to Figure~\ref{img:subdomains}.
	\item Find a numerical solution of \eqref{eq:Dirichlet_problem} using the Algorithm \ref{alg:final_alg} with some approximation accuracy $\epsilon$.\label{itm:find_solution}
	\item Find the energy $E_s$ of the solution from the previous step.\label{itm:find_energy}
	\item Repeat \ref{itm:find_solution}-\ref{itm:find_energy} with various $\epsilon$ and vertex counts.
	\item Find the solution energy approximation $E$ using Richardson extrapolation~\cite{liu2006fundamental} with energies from step~\ref{itm:find_energy}.\label{itm:aprox_solution}
\end{enumerate}

Figures~\ref{img:bench_tri},~\ref{img:bench_star},~\ref{img:eranks} present the results of our experiments.

The left plots in Figures~\ref{img:bench_tri},~\ref{img:bench_star} show an energy error -- the difference between the approximated solution energy $E$ from step~\ref{itm:aprox_solution} and the energies for solutions given by Algorithm~\ref{alg:final_alg} and FEniCS with various approximation accuracies and a vertex counts. The Algorithm~\ref{alg:final_alg} demonstrates the second-order convergence. The Algorithm~\ref{alg:final_alg} has smaller energy error than FEniCS solution until a grid becomes too fine. After that the approximation error becomes too small for the working precision. The middle plots in Figures~\ref{img:bench_tri},~\ref{img:bench_star} show exponential growths of erank w.r.t. approximation accuracy. The right plots demonstrate that our approach significantly improved the peak memory consumption in comparison with FEniCS.

\begin{figure}[h]
	\centering
	\includegraphics[width=\textwidth]{./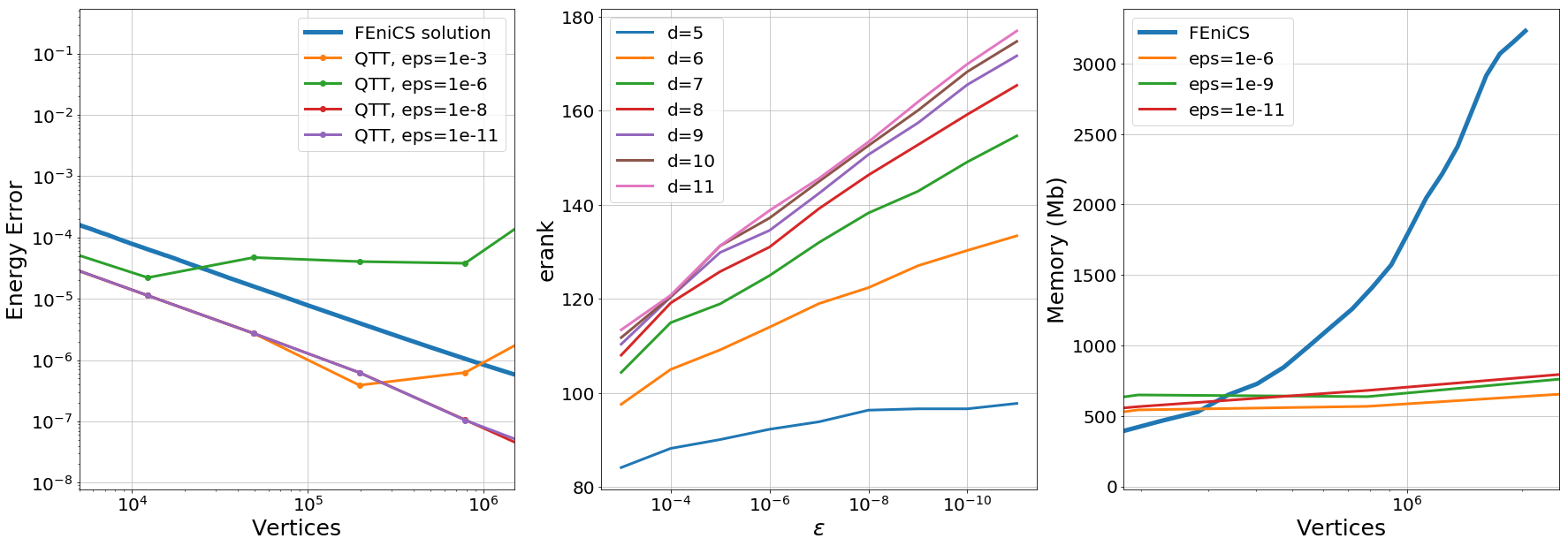}
	\caption[x]{Benchmark for the triangle domain. The left plot is the energy error from vertices number. The middle plot shows erank growth w.r.t. approximation accuracy. The right plot is the peak memory consumption w.r.t. vertex counts.}
	\label{img:bench_tri}
\end{figure}

\begin{figure}[h]
	\centering
	\includegraphics[width=\textwidth]{./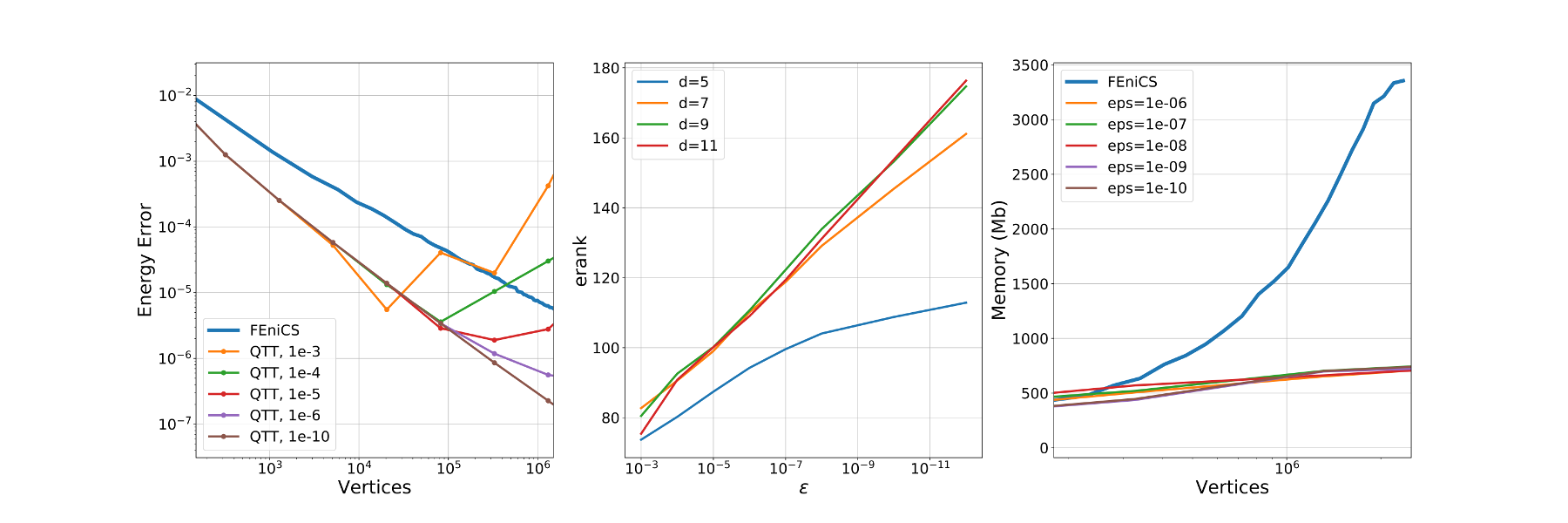}
	\caption[x]{Benchmark for the star domain. The left plot is energy error from vertices number. The middle plot is erank growing from approximation accuracy. The right plot is a peak memory consumption from vertices number.}
	\label{img:bench_star}
\end{figure}

Figure~\ref{img:eranks} presents the growth of erank for stiffness matrices and force vectors for both domains, and shows how energy converges for FEniCS and the Algorithm~\ref{alg:final_alg} on the star domain. The middle plot presents erank growth for matrices of which matrix $B$ of~\eqref{eq:fem_concatinated} consists of. $B_{ii}$ are submatrices located at the diagonal of $B$, and $B_{ij}$ are submatrices at off-diagonal positions of $B$. $A_i$ are subdomain stiffness matrices before the concatenation conditions applied. All matrices shown on this plot demonstrate slow erank growth w.r.t. the number of vertices. The middle plot presents the dynamics of erank changing for the whole matrix $B$, the final force vector $g$, and its parts for each subdomain $g^{(m)}$ from~\eqref{eq:fem_concatinated}. $g$ and $g^{(m)}$ have the same behavior for star and triangle domains. $B$, $g$ and $g^{(m)}$ all demonstrate a slow erank growth w.r.t. the number of vertices. The right plot presents energy convergence for FEniCS and the Algorithm~\ref{alg:final_alg}. It is important to mention that FEniCS energy approaches the approximated energy from the bottom whereas the Algorithm~\ref{alg:final_alg} energy approaches from the top. This means that FEniCS gives a lower bound of the solution energy and the Algorithm~\ref{alg:final_alg} gives an upper bound of the solution energy.

\begin{figure}[h]
	\centering
	\includegraphics[width=0.66\textwidth]{./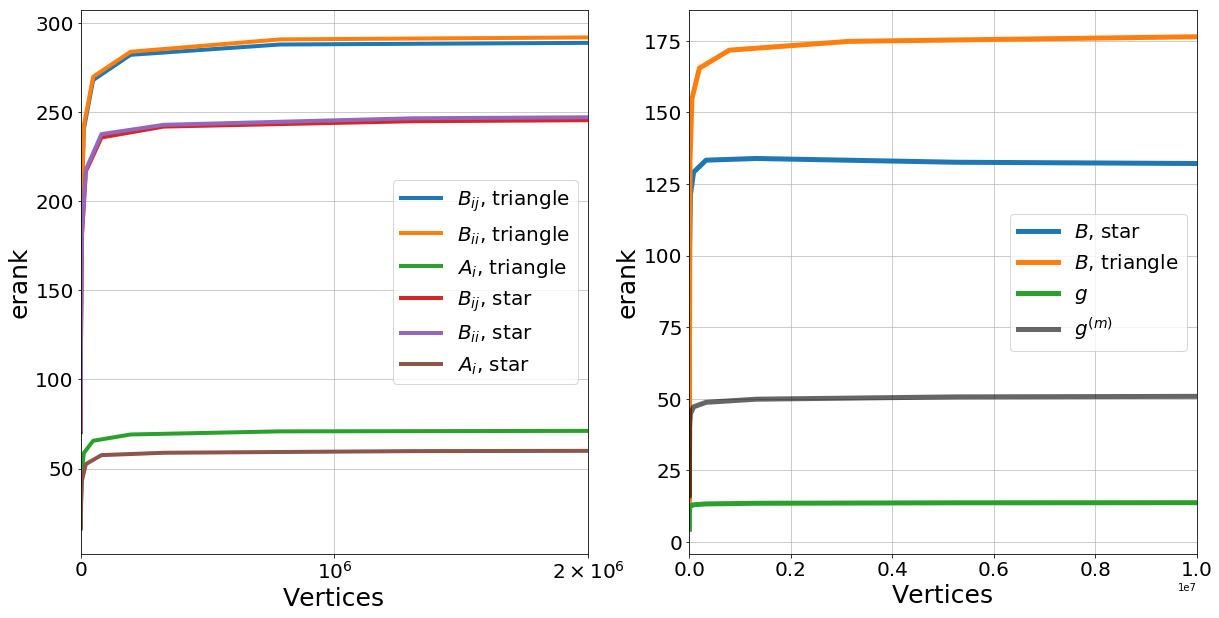}
	\caption[x]{The left plot is eranks for parts of matrix $B$ from~\eqref{eq:fem_concatinated}. The middle plot is eranks for matrix $B$, vector $g$ and vectors $g^{(m)}$ from~\eqref{eq:fem_concatinated}. The right plot shows solution convergence for FEniCS and the Algorithm~\ref{alg:final_alg} for the star-shaped domain.}
	\label{img:eranks}
\end{figure}

\section{Conclusion}
In this paper, we proposed a numerical algorithm that solves a two-dimensional elliptic problem in a polygonal domain using QTT and isogeometric analysis. We proposed the discretisation scheme that can be used to build the final stiffness matrix in the QTT-format with logarithmic complexity. This scheme involves a solution concatenation between sub-domains.

This scheme allows to use successfully QTT-format advantages and avoids some problems like approximation ranks growth. The approximation ranks growth was avoided by introduction a special matrix nodes ordering -- z-order, and a new operation -- Z-kron, which helps to construct the ordering. The present paper presents the way of building a mesh grid for 2D dimension case in z-order using this operation. We show how to build auxiliary matrices in z-order for solution concatenation.

We have shown that it is possible to build the final stiffness matrix in QTT-format with help of Z-kron ``on the fly'' as opposed to the transformation of a calculated matrix into QTT. This fact allows us to decrease approximation ranks and peak memory consumption.

Finally, our experiments show that our algorithm has the second-order convergence, consumes less memory than FEniCS and gives an upper bound estimation for the solution.

	\bibliographystyle{unsrt}

\end{document}